\newcommand{\stkout}[1]{\ifmmode\text{\sout{\ensuremath{#1}}}\else\sout{#1}\fi}
\newtheorem{lemma}{Lemma}[section]
\newtheorem{theorem}{Theorem}[section]
\newtheorem{proposition}{Proposition}[section]
\theoremstyle{definition}
\newtheorem{definition}{Definition}[section]
\theoremstyle{remark}
\newtheorem{remark}{Remark}[section]
\numberwithin{theorem}{section}
\numberwithin{equation}{section}
\crefname{section}{Section}{Sections}
\crefname{subsection}{Section}{Sections}
\crefname{condition}{Condition}{Conditions}
\crefname{hypothesis}{Hypothesis}{Conditions}
\crefname{assumption}{Assumption}{Assumptions}
\crefname{lemma}{Lemma}{Lemmas}
\crefname{fact}{Fact}{Facts}
\Crefname{figure}{Figure}{Figures}
\newcommand{\vertiii}[1]{{\left\vert\kern-0.25ex\left\vert\kern-0.25ex\left\vert #1
    \right\vert\kern-0.25ex\right\vert\kern-0.25ex\right\vert}}
\definecolor{dmagenta}{rgb}{.4,.1,.5}
\definecolor{dblue}{rgb}{.0,.0,.5}
\definecolor{mblue}{rgb}{.0,.0,.7}
\definecolor{ddblue}{rgb}{.0,.0,.4}
\definecolor{dred}{rgb}{.7,.0,.0}
\definecolor{dgreen}{rgb}{.0,.5,.0}
\definecolor{Eeom}{rgb}{.0,.0,.5}
\newcommand{\ttl}{A NOTE ON HOPF'S LEMMA AND STRONG MINIMUM PRINCIPLE FOR NONLOCAL EQUATIONS WITH NON-STANDARD GROWTH}
\begin{document}
\title[Hopf lemma and minimum principle]
{\ttl}

\author{Abhrojyoti Sen}

\address{Department of Mathematics, Indian Institute of Science Education and Research, Dr. Homi Bhabha Road,
Pune 411008, India. Email: abhrojyoti.sen@acads.iiserpune.ac.in }

\begin{abstract}
Let $\Omega\subset \mathbb{R}^n $ be any open set and $u$ be a weak supersolution of $\mathcal{L}u=c(x)g(|u|)\frac{u}{|u|}$
where 
\[\mathcal{L}u(x)=\text{p.v.} \int_{\mathbb{R}^n} g\left(\frac{|u(x)-u(y)|}{|x-y|^s}\right) \frac{u(x)-u(y)}{|u(x)-u(y)|} K(x,y)\frac{dy}{|x-y|^s}\] and $g=G^{\prime}$ for some Young function $G.$ This note imparts a Hopf's type lemma and strong minimum principle for $u$ when $c(x)$ is continuous in $\overline{\Omega}$ that extend the results of Del Pezzo and Quaas (JDE-2017) in fractional Orlicz-Sobolev setting.
\end{abstract}
\keywords{Fractional nonlocal equations, Young function, Hopf's lemma,
strong minimum principle, fractional Orlicz-Sobolev space}
\subjclass[2020]{Primary: 35R11, 47G20, 35D30, 35B50 }

\maketitle


\section{Introduction and main results}
This short note establishes a Hopf's boundary point lemma and strong minimum principle for the fractional nonlocal equation
\begin{equation} \label{nonlocal equation}
    \mathcal{L}u=c(x)g(|u|) \frac{u}{|u|} \,\,\,\, \, \text{in} \,\,\,\, \Omega,
\end{equation}
where $\Omega \subset \mathbb{R}^n$ is an open set and for $0<s<1$ the nonlocal operator $\mathcal{L}u$ is given by 
\begin{equation}\label{nonlocal operator}
    \mathcal{L}u(x):= \text{p.v.} \int_{\mathbb{R}^n} g\left(\frac{|u(x)-u(y)|}{|x-y|^s}\right) \frac{u(x)-u(y)}{|u(x)-u(y)|} \frac{K(x,y)}{|x-y|^s}dy.
\end{equation}
The function $g:[0, \infty) \to [0,\infty)$ is continuous, strictly increasing and $g$ satisfies $g(0)=0, \displaystyle{\lim_{t\to \infty}} g(t)=\infty.$ Furthermore, $g$ satisfies the following condition:
\begin{equation}\label{condition on g}
    1 < p_g \leq \frac{t g^{\prime}(t)}{g(t)} \leq q _g< \infty,\,\,\, \text{for some}\,\,\, 1<p_g \leq q_g.
\end{equation}
Also, we set the following assumptions on the kernel $K(x, y).$ $K : \mathbb{R}^n \times \mathbb{R}^n \to (0, \infty]$ satisfies
\begin{itemize}
    \item Symmetry : $K(x,y)=K(y,x)$ \,\,\,\, \text{for all}\,\,\, $x,y \in \mathbb{R}^n.$
    \item Translation invariance: $K(x+z, y+z)=K(x,y)$ \,\,\,\, \text{for all}\,\,\, $x,y, z \in \mathbb{R}^n.$
    \item Growth condition: $\frac{\lambda}{|x-y|^n} \leq K(x,y) \leq \frac{\Lambda}{|x-y|^n}$ \,\,\,\, \text{for all}\,\,\, $x,y \in \mathbb{R}^n$\,\,\, \text{and for some}\,\,\, $0<\lambda \leq \Lambda.$ 
    \item Continuity: $K(x,y)$ is continuous in both of its variable.
\end{itemize}
Note that when $K(x,y)=\frac{1}{|x-y|^n},$ the operator becomes the so called fractional $g$-Laplacian and it is denoted by $(-\Delta)^s_g.$ The well known fractional Laplacian can be obtained by simply taking $g:=Id.$ In recent years, these operators have received a lot of attention due to their applicability in various fields, for example, optimization, finance, phase transitions, thin obstacle problem, optimal transport, image reconstruction, Le\'{v}y processes, game theory, conservation laws, minimal surfaces, materials science, water waves,  as well as diffusion problems ( see \cite{Bertoin, CA, VA} and the references therein ). The structure of $(-\Delta)_g^s$ naturally suggests to consider {\it fractional Orlicz-Sobolev spaces} introduced by Bonder and Salort in \cite{bonder2}. Interested readers can see \cite{AC2021, AC2, BOT} and the references therein, where some structural properties of fractional Orlicz-Sobolev spaces have been studied recently. Coming back to the operator $(-\Delta)_g^s,$ several topics from the point of view of PDE, such as eigenvalue related problems \cite{SA1, SA2, SA3}, Liouville type theorem and symmetry results \cite{SA4}, regularity results \cite{salortNA2022, byun2022} have been addressed very recently. 

In this note, we are interested to establish a Hopf lemma and strong minimum principle for the equation \eqref{nonlocal equation} involving a general operator \eqref{nonlocal operator}. Hopf boundary point lemma is a classical and one of the most useful results in the theory of elliptic partial differential equations. This result was first proved by E. Hopf in his seminal paper \cite{H1} for second order elliptic PDE without the term $c(x)u.$ After this influential work of Hopf, there are various improvements and applications came into the literature, for instance, it is observed that Hopf lemma is useful to validate strong maximum principle for second order uniformly elliptic operators, see \cite{pucci1, pucci2} for a comprehensive overview. Moreover, it has applications to the symmetry results and overdetermined problems for local and nonlocal elliptic equations \cite{BJ20, FJ15, FK08, GS2016}. In \cite{BL21}, Hopf lemma, maximum principle and certain symmetry results for a general class of nonlocal operators have been recently established.

There is a large class of functions $g$ that satisfies \eqref{condition on g}. To cite some of them we mention, $g(t)=Ct^{p-1}, p>1$ and some constant $C>\frac{1}{p-1},$ $g(t)=t^{\alpha}\log(\beta + \gamma t)$ where $\alpha >1, \beta \geq e^{M_0}$ for some $M_0>0$ and $\gamma \geq 0.$ Furthermore, if $g_1,g_2$ are two functions satisfying \eqref{condition on g}, then their linear combination $\alpha g_1+\beta g_2$ with $\alpha, \beta \geq 0,$ product $g_1\cdot g_2$ and composition $g_1\circ g_2$ also satisfy \eqref{condition on g}. When $g(t)=Ct^{p-1}$ with $p>1,$ the operator \eqref{nonlocal operator} becomes the fractional $p$-Laplacian. Harnack inequality and other regularity results were established for fractional $p$-Laplace equations and nonlocal equations with general growth in \cite{DKP1,DKP2, FZ22}. Del Pezzo and Quaas \cite{delpezzoJDE2017} provided a Hopf lemma and strong minimum principle for the equation of type \eqref{nonlocal equation} when $\mathcal{L}$ is fractional $p$-Laplacian. Hopf lemma and strong maximum principles are obtained for other fractional $p$-Laplace equations and their generalizations in \cite{Li1, Li2, Li3, Li4, Li5, SZ22}. Furthermore, very recently similar kind of study has been carried out for regional fractional Laplacian \cite{ABF2022} and for fractional $(p,q)$-Laplacian operators \cite{VA2022}.
\subsection{Main results} In this section we state our main results. The first result is the strong minimum principle.
\begin{theorem}\label{mainth1}
Let $c\in L^1_{loc}(\Omega)$ (resp. $c\in C(\overline{\Omega})$) with $c(x) \leq 0$ in $\Omega$ and $u \in W^{s, G}_{loc} (\Omega) \cap L_s^g(\mathbb{R}^n)$ (resp. $u \in W^{s, G}_{loc} (\Omega) \cap L_s^g(\mathbb{R}^n) \cap C(\overline{\Omega})$) be a weak supersolution to 
\begin{align*}
    \mathcal{L}u(x)=c(x)g(|u|)\frac{u}{|u|} \,\,\, \text{in}\,\,\, \Omega.
\end{align*}
Then the following conclusions hold:
\begin{enumerate}
    \item For a bounded domain $\Omega,$ if $u$ satisfies the condition that $u\geq 0\,\,\, a.e.$ in $\Omega^c$ then either $u>0\,\,\, a.e.$ in $\Omega$ (resp. $u>0$ in $\Omega$) or $u=0 \,\,\, a.e.$ in $\mathbb{R}^n.$
    \item If  $u\geq 0\,\,\, a.e.$ in $\mathbb{R}^n,$ we have either $u>0\,\,\, a.e.$ in $\Omega$ (resp. $u>0$ in $\Omega$) or $u=0 \,\,\, a.e.$ in $\mathbb{R}^n.$
\end{enumerate}
\end{theorem}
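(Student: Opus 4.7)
I argue by contradiction. Assume $u$ is not identically zero a.e.~in $\mathbb{R}^n$; under either hypothesis of the theorem one has $u\ge 0$ on all of $\mathbb{R}^n$, so $\{u>0\}$ has positive Lebesgue measure. Suppose further that $A:=\{x\in\Omega: u(x)=0\}$ also has positive measure. The guiding idea is that the nonlocal interaction between the contact set $A$ and the positive set $\{u>0\}$, probed by a nonnegative test function concentrated on $A$, must be strictly negative, contradicting the supersolution property.

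Concretely, I insert a nonnegative test function $\varphi$ compactly supported in $\Omega$ and approximating $\mathbf{1}_A$ into the weak formulation
\[
\iint_{\mathbb{R}^n\times\mathbb{R}^n} g\!\left(\tfrac{|u(x)-u(y)|}{|x-y|^s}\right)\frac{u(x)-u(y)}{|u(x)-u(y)|}\,(\varphi(x)-\varphi(y))\,\frac{K(x,y)}{|x-y|^s}\,dx\,dy \;\ge\; \int_{\Omega} c(x)\,g(|u|)\,\tfrac{u}{|u|}\,\varphi\,dx.
\]
Since $u=0$ on $A$ and $g(0)=0$, the right-hand side vanishes. I split the left-hand integral by cases: on $A\times A$ the integrand is $0$ because $u\equiv 0$; on $(\mathbb{R}^n\setminus A)\times(\mathbb{R}^n\setminus A)$ it is $0$ because $\varphi\equiv 0$. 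Only the two symmetric cross pieces $A\times(\mathbb{R}^n\setminus A)$ survive; on such a piece $u(x)=0$, $u(y)\ge 0$, and $\varphi(x)-\varphi(y)=1$, so wherever $u(y)>0$ the integrand equals
\[
-\,g\!\left(\tfrac{u(y)}{|x-y|^s}\right)\frac{K(x,y)}{|x-y|^s}<0
\]
by positivity of $g$ on $(0,\infty)$ and of $K$. Since $\{u>0\}$ has positive measure and $K(x,y)\ge\lambda|x-y|^{-n}$, the full left-hand side is strictly negative, contradicting the weak supersolution inequality.

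The chief technical obstacle is that $\mathbf{1}_A$ is not in $W^{s,G}$, so one must construct a sequence $\varphi_k\ge 0$ of admissible test functions, compactly supported in $\Omega$, with $\varphi_k\to\mathbf{1}_A$ in a topology strong enough to pass to the limit in both sides of the weak inequality; the doubling-type growth condition~\eqref{condition on g}, the kernel bound $K(x,y)\le \Lambda|x-y|^{-n}$, and the tail hypothesis $u\in L_s^g(\mathbb{R}^n)$ supply the dominated-convergence estimates needed. For the continuous version, the a.e.~conclusion above forces $u>0$ on a full-measure subset of $\Omega$; any remaining isolated interior zero $x_0\in\Omega$ is excluded by a pointwise analysis at $x_0$, where the weak supersolution inequality (combined with continuity of $u$ near its minimum) yields $\mathcal L u(x_0)\ge 0$, while
\[
\mathcal L u(x_0)=-\,\text{p.v.}\int_{\mathbb{R}^n} g\!\left(\tfrac{u(y)}{|x_0-y|^s}\right)\frac{K(x_0,y)}{|x_0-y|^s}\,dy\le 0
\]
with equality only if $u=0$ a.e., once again forcing $u\equiv 0$.
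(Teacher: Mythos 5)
Your heuristic---probe the nonlocal interaction between the contact set $A=\{u=0\}\cap\Omega$ and the positive set---is in the right spirit, but your implementation is genuinely different from the paper's and has an unrepaired gap at its core. The paper never tests against an approximation of $\chi_A$. Instead it tests with the always-admissible function $\eta=(u+\alpha)\phi^q/G((u+\alpha)/r^s)\in W^{s,G}_0$ and proves the logarithmic estimate of \cref{Loglemma}, a nonlocal Caccioppoli-type bound on $\log(u+\alpha)$ that is uniform in $\alpha$. Letting $\alpha\to0$ over a chain of overlapping balls, following Brasco--Franzina, forces $u>0$ a.e.\ on each compact connected subset of $\Omega$; unbounded and disconnected domains are then handled by citing \cite{delpezzoJDE2017}. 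Your plan compresses all of this into a single limit $\varphi_k\to\chi_A$, and that limit is precisely where it breaks.

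The gap: the left-hand side of the weak inequality, viewed as a functional of the test function $\eta$ for fixed $u$, is controlled only through the $W^{s,G}$ seminorm of $\eta$ (Young's inequality between $G$ and $G^*$). It is not continuous under $L^1$ or pointwise a.e.\ convergence of $\eta$. For a mollified sequence $\varphi_k\to\chi_A$ the $\Omega\times\Omega$ block of the integrand is not sign-definite for finite $k$ (mollification smears the jump of $\chi_A$), there is no uniform integrable dominant near the diagonal, and $[\varphi_k]_{s,G}$ can blow up, so neither dominated convergence nor Fatou delivers the limit. Your appeal to the doubling condition, kernel bounds, and tail hypothesis controls the far-field $\Omega^c$ contribution, not the near-diagonal $\Omega\times\Omega$ block, which is the dangerous one; in effect you are assuming the estimate that \cref{Loglemma} exists to provide. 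A second, smaller gap concerns the continuous case: a weak supersolution does not satisfy $\mathcal{L}u(x_0)\geq0$ pointwise at an interior zero, and $\mathcal{L}u(x_0)$ need not even be defined as a principal value for merely continuous $u$. The rigorous route is the paper's: upgrade the weak supersolution to a viscosity supersolution via \cref{VWlemma}, then touch from below with $\psi^\epsilon(x)=-\epsilon|x-\tilde x|^\beta$ and send $\epsilon\to0$ as in \cref{lemma3.2}. Your direct pointwise evaluation of $\mathcal{L}u$ skips exactly this step.
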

Now we state our Hopf's boundary point lemma.
\begin{theorem}\label{mainth2}
Suppose $\Omega$ satisfies an interior ball condition at a point $y \in \partial \Omega,$ $c\in C(\overline{\Omega})$ and $u \in W^{s, G}_{loc} (\Omega) \cap L_s^g(\mathbb{R}^n) \cap C(\overline{\Omega})$ be a weak supersolution to
\begin{align*}
    \mathcal{L}u(x)=c(x)g(|u|)\frac{u}{|u|} \,\,\, \text{in}\,\,\, \Omega.
\end{align*}
Then the following conclusions hold:
\begin{enumerate}
    \item Let $\Omega$ be bounded, $c(x) \leq 0$ in $\Omega$ and let $u$ satisfy the condition that $u\geq 0\,\,\, a.e.$ in $\Omega^c.$ Also, denote $B_r(x_0)$ as a ball of radius $r$ centred at $x_0 \in \Omega$ that touches $y\in \partial \Omega$ from the interior and $\delta(x):=dist (x, B^c_r(x_0)).$ Then either 
    \begin{align}\label{Hlemma}
        \liminf_{B_r(x_0) \ni x \to y } \frac{u(x)}{\delta^s(x)} >0
    \end{align}
    
    or $u=0 \,\,\, a.e.$ in $\mathbb{R}^n.$
    \item Whenever $u\geq 0\,\,\, a.e.$ in $\mathbb{R}^n,$ we have either \eqref{Hlemma} holds or $u=0 \,\,\, a.e.$ in $\mathbb{R}^n.$
\end{enumerate}
\end{theorem}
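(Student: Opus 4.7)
The plan is to combine the strong minimum principle of \Cref{mainth1} with a comparison argument against a carefully chosen radial barrier supported in the interior ball $B_r(x_0)$.

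\textbf{Reduction via the minimum principle.} \Cref{mainth1} directly applies: in Case~(1), with $\Omega$ bounded, $c\le 0$, and $u\ge 0$ a.e.\ in $\Omega^c$, \Cref{mainth1}(1) yields either $u\equiv 0$ a.e.\ in $\mathbb{R}^n$ (in which case there is nothing to prove) or $u>0$ in $\Omega$; Case~(2) uses \Cref{mainth1}(2) and gives the same alternative. Assume henceforth $u>0$ in $\Omega$ and set $m\df \inf_{\overline{B_{r/2}(x_0)}} u>0$, which is strictly positive by continuity and compactness.

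\textbf{Radial barrier.} The technical heart of the proof is to construct $\psi\in W^{s,G}_{loc}(\mathbb{R}^n)\cap C(\mathbb{R}^n)$ which is radial about $x_0$, supported in $\overline{B_r(x_0)}$, and satisfies for some $\tau>0$:
\begin{enumerate}
\item $0\le\psi\le m$ on $\mathbb{R}^n$;
\item $\psi(x)\ge \tau\,\delta^s(x)$ on $A\df B_r(x_0)\setminus\overline{B_{r/2}(x_0)}$;
\item $\mathcal{L}\psi(x)\le c(x)\,g(\psi(x))\frac{\psi(x)}{|\psi(x)|}$ pointwise on $A$.
\end{enumerate}
A natural candidate is $\psi(x)=\kappa\,\eta(|x-x_0|)$, where $\eta(t)=(r-t)^s$ for $t\in[r/2,r]$ and $\eta$ is extended on $[0,r/2]$ as a smooth concave function matching first derivatives at $t=r/2$, for a small parameter $\kappa>0$. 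Items (1)--(2) are built in. For (3), one splits the principal-value integral defining $\mathcal{L}\psi(x)$ into near-, intermediate-, and far-range pieces about $x\in A$; the concavity of $\eta$ furnishes a strictly negative leading contribution in the near piece, while the growth bounds on $K$ and the two-sided scaling
\[
\min(\kappa^{p_g-1},\kappa^{q_g-1})\,g(t)\;\le\;g(\kappa t)\;\le\;\max(\kappa^{p_g-1},\kappa^{q_g-1})\,g(t),
\]
a direct consequence of \eqref{condition on g}, give uniform control of the remaining pieces. The outcome is an estimate $\mathcal{L}\psi\le -C\,g(\kappa)$ on $A$ with $C=C(r,s,n,\lambda,\Lambda,p_g,q_g)>0$. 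Combined with $\|c\|_{C(\overline{\Omega})}<\infty$ and $g(\psi(x))\le g(\kappa)$ on $A$, this yields (3) after taking $\kappa$ sufficiently small; the same $\kappa$ also ensures (1).

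\textbf{Comparison and conclusion.} Properties (1)--(2) and the hypothesis on $u$ in $\Omega^c$ give $\psi\le u$ on $\overline{B_{r/2}(x_0)}\cup B_r(x_0)^c$, so $(\psi-u)_+$ is compactly supported in $A$. Subtracting the weak inequality $\mathcal{L}u\ge c(x)g(|u|)\sign(u)$ from the subsolution property (3) of $\psi$, testing against the admissible function $(\psi-u)_+\in W^{s,G}(\mathbb{R}^n)$, and using the strict monotonicity of $g$ (which renders the bilinear form of $\mathcal{L}u-\mathcal{L}\psi$ pointwise nonnegative on the support of $(\psi-u)_+$), one concludes $(\psi-u)_+\equiv 0$, hence $u\ge \psi$ in $\mathbb{R}^n$. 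For $B_r(x_0)\ni x\to y$, property~(2) then gives $u(x)/\delta^s(x)\ge \tau$, proving \eqref{Hlemma}.

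The principal obstacle is the sharp barrier computation: in the fractional $p$-Laplace case of Del Pezzo--Quaas \cite{delpezzoJDE2017}, homogeneity cleanly separates the $\kappa$-dependence, whereas here the non-homogeneity of $g$ forces one to track every estimate through \eqref{condition on g}. A secondary technical point is that the interior smoothing of $\eta$ must simultaneously place $\psi$ in the Orlicz--Sobolev energy class $W^{s,G}_{loc}(\mathbb{R}^n)$ and keep $\mathcal{L}\psi$ pointwise defined on $A$.
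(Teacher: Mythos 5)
Your reduction via \Cref{mainth1} is exactly what the paper does, and the broad architecture (barrier plus comparison) is right. But the technical core of your proposal---the sign estimate $\mathcal{L}\psi\le -C\,g(\kappa)$ on the annulus $A$---is asserted, not proved, and it is almost certainly false for the barrier you describe. Two concrete problems.

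First, you claim ``the concavity of $\eta$ furnishes a strictly negative leading contribution in the near piece.'' However, $\eta(t)=(r-t)^s$ is \emph{convex} on $[r/2,r)$ (its second derivative is $s(1-s)(r-t)^{s-2}>0$), so your barrier is not concave where the estimate is needed. More fundamentally, for the nonlocal operator the near-range behaviour of the $\delta^s$-profile does not produce a favourable sign: $\delta^s$ is precisely the critical profile for which $\mathcal{L}\delta^s$ is \emph{bounded} (this is exactly \cref{prop2.2}), and the sign is typically the wrong one for a subsolution argument---already for the ordinary fractional Laplacian, $(-\Delta)^s\delta_{B_r}^s$ is bounded and \emph{positive} inside $B_r$ (think of the torsion function $(r^2-|x|^2)_+^s$). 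Adding a smooth interior extension does not change this; a compactly supported nonnegative bump, if anything, pushes $\mathcal{L}\psi(x)$ for $x$ near $\partial B_r$ further positive through the near- and far-field contributions, since you are subtracting larger values $\psi(y)$, $y\in B_{r/2}$, from a small $\psi(x)$.

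Second, this is precisely the obstruction that forces the Del Pezzo--Quaas mechanism in the paper's proof, which your proposal bypasses. The paper does \emph{not} attempt a direct sign estimate for a hand-built barrier. Instead it combines \cref{prop2.2} (which only gives $\mathcal{L}\delta^s = f \in L^\infty$ in $\Omega_\rho$, with no sign control) with the perturbation lemma \cref{salortNAlemma}: setting $v=\delta^s+d\chi_K$ for a compact $K\subset B_r\setminus B_\rho$, one obtains $\mathcal{L}v = f + h_d$ weakly in $B_\rho$, and the explicit formula for $h_d$ shows $h_d\to -\infty$ uniformly on $B_\rho$ as $d\to\infty$ (because $\mathrm{dist}(K,B_\rho)>0$ and $\delta^s$ is bounded). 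The favourable sign is produced by the \emph{nonlocal} effect of the interior bump $d\chi_K$, not by the boundary profile. Choosing $d$ large gives $\sup_{B_\rho}(f+h_d)\le \inf_{B_\rho} c\,g(u)$, and then a small $\kappa$ gives $\kappa v\le u$ on $B_\rho^c$, after which the comparison principle \cref{prop2.1} finishes. Your proposal is missing this bump: without it there is no reason for the barrier to be a subsolution near the boundary, and the comparison step has nothing to compare against. You would also need to confront the fact that, because $g$ is inhomogeneous, $\mathcal{L}(\kappa v)$ does not scale simply in $\kappa$; the paper sidesteps this by setting up the weak comparison inequality directly in terms of the bilinear forms rather than via a scaling identity.

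In short: the reduction step and the comparison framework match the paper, but the claimed pointwise subsolution estimate for the barrier is a genuine gap. To repair it you would need to re-introduce the interior bump (or an equivalent device) together with \cref{prop2.2} and \cref{salortNAlemma}, which is exactly what the paper does.
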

The rest of the article is organized as follows. In \cref{sec2}, we introduce the necessary preliminaries and collect all auxiliary results which will be used throughout the article. In \cref{sec3} we prove \cref{mainth1}. Lastly, \cref{mainth2} is proved in \cref{sec4}.
\section{Preliminaries and Auxiliary lemmas}\label{sec2} This section collects all the necessary definitions, notions of solutions, and lemmas. We start with the {\it Young function.}
\subsection{Young functions} A $C^2$-function $G:[0, \infty) \to [0, \infty)$ is called a {\it Young function} if it is increasing, convex, $G(0)=0,$ 
\[\lim_{t \to 0}\frac{G(t)}{t}=0 ,\,\,\,\,\, \lim_{t\to \infty}\frac{G(t)}{t}=\infty\] and $G$ satisfies an integral representation 
\[G(t):=\int_0^t g(\theta)d\theta\] for some $g:[0,\infty) \to [0,\infty)$ satisfying the property \eqref{condition on g}. We extend $g$ in $\mathbb{R}$ by defining $g(t)=-g(-t)$ for $t<0.$ Simply by integrating \eqref{condition on g} and using the fundamental theorem of calculus, we obtain
\begin{align}\label{condition on G}
2<p \leq \frac{tg(t)}{G(t)} \leq q <\infty\,\,\,\, \textnormal{where}\,\,\, p=p_g+1, \,\,q=q_g+1.
\end{align}
For a Young function $G$ we define its conjugate $G^*:[0,\infty) \to [0,\infty)$ by 
\begin{align}\label{conjugate of G}
G^*(t):=\sup_{s\geq 0}\{st-G(s)\},\,\,\, t\geq 0.
\end{align} Moreover $G$ satisfies the following \textit{$\Delta_2$ condition}:
 $G(2t) \leq 2^q G(t)$ and using \eqref{condition on G}, this certainly implies $g(2t) \leq 2^{q_g}g(t).$ Further, \eqref{condition on g} implies
\begin{align*}
a^qG(t)\leq G(at) \leq a^p G(t)\,\,\, \text{for}\,\,\, 0<a<1\,\,\, \text{and}\,\,\, a^pG(t)\leq G(at) \leq a^q G(t)\,\,\, \text{for}\,\,\, a>1.
\end{align*} More generally, for any $a, t \geq 0,$ we have (see \cite[Lemma~2.1]{bonder})
 \begin{align*} 
 \min\left\{a^{p_g}, a^{q_g}\right\}g(t)\leq g(at) \leq \max\left\{a^{p_g}, a^{q_g}\right\}g(t)
 \end{align*}
 and
\begin{align*}
\min\left\{a^{p}, a^{q}\right\}G(t)\leq G(at) \leq \max\left\{a^{p}, a^{q}\right\} G(t).
\end{align*}
Furthermore, we impose the following condition on $g$:\\\\
  \textit{\hspace*{5cm}$g$ is a convex function on $[0, \infty).$ } \\\\
 This condition corresponds to the degenerate case $p\geq 2$ for fractional $p$-Laplacian. Notice that fine boundary regularity estimates are only available for the case $p\geq 2$ for fractional $p$-Laplace equation \cite{IMS}.
\subsection{Fractional Orlicz-Sobolev spaces} For any open set $\Omega \subset \mathbb{R}^n,$ we consider the class of all real valued measurable functions on $\Omega,$ denoted by $\mathcal{M}(\Omega).$ Given a Young function $G$ satisfying $g=G^{\prime}$ and $\Delta_2$ condition, we define the {\it Orlicz spaces} by
\[L^{G}(\Omega):=\left\{u\in \mathcal{M}(\Omega)\,\,\Big|\,\, \int_{\Omega}G\left(|u(x)|\right)dx < \infty\right\}.\] This forms a Banach space with the Luxemburg norm:
\[||u||_{L^{G}(\Omega)}:=\inf\left\{\lambda>0 \,\,\Big| \int_{\Omega}G\left(\frac{\left|u(x)\right|}{\lambda}\right)dx \leq 1\right\}.\]
For $s\in (0,1),$ we define the {\it fractional Orlicz-Sobolev space} as
\[W^{s,G}:=\left\{u \in L^G(\Omega)\,\, \Big|\,\, \int_{\Omega} \int_{\Omega}G\left(\frac{|u(x)-u(y)|}{|x-y|^s}\right)\frac{dxdy}{|x-y|^n} < \infty\right\}.\] This also forms a Banach space with the Luxemburg type norm
\[||u||_{W^{s,G}(\Omega)}:=||u||_{L^G(\Omega)}+[u]_{s, G, \Omega}\,\, ,\] where $[u]_{s, G, \Omega}$ is the Gagliardo semi-norm defined as
\[[u]_{s,G, \Omega}:=\inf\left\{\lambda>0 \,\, \Big|\,\, \int_{\Omega} \int_{\Omega} G\left(\frac{|u(x)-u(y)|}{\lambda |x-y|^s}\right)\frac{dxdy}{|x-y|^n}\leq 1\right\}.\] Further we define the spaces
\begin{align*}
W^{s, G}_{loc}(\Omega):=\left\{u \in L^G_{loc}(\mathbb{R}^n)\,\, \Big|\,\, u\in W^{s,G}(U)\right\},
\end{align*}
where $U$ be any open set such that $\Omega^{\prime} \subset \subset U$ for any bounded $\Omega^{\prime} \subset \Omega,$ and
\[W^{s,G}_0(\Omega):=\left\{u \in W^{s,G}(\mathbb{R}^n)\,\, \Big|\,\, u=0 \,\,\, \text{in }\,\,\, \mathbb{R}^n\setminus \Omega\right\},\]
\[W^{-s, G^*}(\Omega):=\left(W^{s,G}_0(\Omega)\right)^*,\] where $G^*$ denotes the conjugate of $G$ defined in \eqref{conjugate of G}. Next, we define
\begin{align*}
    L^g_s(\mathbb{R}^n):=\left\{u \in \mathcal{M}(\mathbb{R}^n) \,\, \Big|\,\, \int_{\mathbb{R}^n}g\left(\frac{|u(x)|}{(1+|x|)^s}\right)\frac{dx}{(1+|x|)^{n+s}} < \infty\right\},
\end{align*}
and the nonlocal tail of $u \in L^g_s(\mathbb{R}^n)$ for the ball $B_R(x_0)$ is define by
\begin{align*}
    \text{Tail}(u, x_0, R):= \int_{B^c_R(x_0)} g\left(\frac{|u(x)|}{|x-x_0|^s}\right)\frac{dx}{|x-x_0|^{n+s}}.
\end{align*}
\begin{remark}
Note that $u \in L^g_s(\mathbb{R}^n)$ if and only if $\text{Tail}(u, x_0, R) <\infty$ for all $x_0 \in \mathbb{R}^n$ and $R>0.$ Also if $u\in L^{\infty}(\mathbb{R}^n)$ then clearly $u \in L^g_s(\mathbb{R}^n).$
\end{remark}
\subsection{Notion of weak and viscosity solutions}In this section, we recall the notion of weak and viscosity solutions. Let us start with the weak solution.
\begin{definition}
Let $\Omega \subset \mathbb{R}^n$ be a bounded domain. A function $u \in W^{s,G}(\Omega)\cap L^g_s(\mathbb{R}^n)$ is said to be a weak supersolution (subsolution) of $\mathcal{L}u=f$ if
\begin{align}\label{weak solution}
\int_{\mathbb{R}^n}\int_{\mathbb{R}^n}g\left(\frac{|u(x)-u(y)|}{|x-y|^s}\right)\frac{u(x)-u(y)}{|u(x)-u(y)|} \left(\eta(x)-\eta(y)\right)K(x,y)\frac{dxdy}{|x-y|^s}  \geq  (\leq ) \int_{\Omega} f(x,u)\eta(x)dx
\end{align}
holds for all non-negative $\eta\in C^{\infty}_c(\Omega),$ where $f\in W^{-s, G^*}(\Omega).$

We say $u$ be a weak solution of $\mathcal{L}u=f$ if it is both sub and supersolution.

If $\Omega$ is unbounded, then $u \in W^{s, G}_{loc}(\Omega)\cap L^g_s(\mathbb{R}^n)$ is a weak supersolution (subsolution) if \eqref{weak solution} holds for any bounded $\Omega^{\prime} \subset \Omega.$
\end{definition}
Next, we recall the definition of viscosity solution from \cite[Definition~2.1]{delpezzoADE}. For any open bounded domain $\Omega \subset \mathbb{R}^n$ and $\beta \geq 2,$ we denote 
\[C^2_{\beta}(\Omega):=\left\{u \in C^2(\Omega) : \sup_{x\in \Omega}\left(\frac{\min \{\delta_u(x), 1\}^{\beta-1}}{\left|Du(x)\right|}+\frac{\left|D^2u(x)\right|}{\delta^{\beta-2}_u(x)}\right) < \infty\right\},\]
where $\delta_u(x):=dist(x, N_u)$ and $N_u:=\{x\in \Omega: Du(x)=0\}$ denotes the set of all critical points of $u.$ Further, denote $u_{+}:=\max\{u, 0\}$ and $u_{-}:=\max\{-u, 0\}.$ Now we introduce our definition below.
\begin{definition}\label{defn2.2}
We say that a function $u : \mathbb{R}^n \to [-\infty, \infty]$ is a viscosity supersolution (subsolution) of \eqref{nonlocal equation} in $\Omega$ if it satisfies the following conditions:\\
(1) $u < +\infty$ ($u> -\infty$) $a.e.$ in $\mathbb{R}^n$ and $u>-\infty$ ($u< \infty$) $a.e.$ in $\Omega.$\\
(2) $u$ is lower (upper) semicontinuous in $\Omega.$\\
(3) If $\psi \in C^{2}(B_r(x_0))\cap L^g_s(\mathbb{R}^n)$ for some $B_r(x_0) \subset \Omega$ such that $\psi(x_0)=u(x_0)$ and $\psi \leq u$ ($\psi \geq u$) in $B_r(x_0)$ and one of the following holds:
\begin{enumerate}
    \item [(i)] $p > \frac{2}{2-s}$ or $D \psi(x_0) \neq 0,$
    \item [(ii)] $1 <p \leq \frac{2}{2-s}, D\psi(x_0)=0$ such that $x_0$ is an isolated point of $\psi$ and $\psi \in C^2_{\beta}(B_r(x_0)) $ for some $\beta > \frac{sp}{p-1},$ 
\end{enumerate}
where $p$ is defined in \eqref{condition on G},
then
\begin{align}\label{visinq}
\mathcal{L}\psi_r(x_0)\geq (\leq)\,\, c(x_0)g(|u(x_0)|)\frac{u(x_0)}{|u(x_0)|},
\end{align}
where 
\begin{align*}
\psi_r(x)=\begin{cases}
\psi(x)\,\,\,\, \textnormal{in}\,\,\, B_r(x_0)\\
u(x)\,\,\,\, \textnormal{otherwise}.
\end{cases}
\end{align*}
(4)  $u_{-} (u_{+}) \in L^g_s(\mathbb{R}^n).$

A function $u$ is said to be a viscosity solution to \eqref{nonlocal equation} if it is both viscosity sub and  supersolution.
\end{definition}
\begin{remark}
One basic difference between the definitions given in \cite{delpezzoADE} and \cite{erikJMPA} is following: \cite{delpezzoADE} needed a larger set of test functions $\psi$ such that $\psi \leq u$ in $\mathbb{R}^n,$ since the non-homogeneous term depends on $$D^s_gu(x):=\int_{\mathbb{R}^n}G\left(\frac{|u(x)-u(y)|}{|x-y|^s}\right)K(x,y)dy$$ as well. As pointed out in \cite[Remark 2.3]{medina2021}, if the non-homogeneous term depends only on $x$ and $u$ then one can allow $\psi_r$ as a test function where $\psi\leq u$ only in $B_r(x_0).$ In our case using $\psi_r$ in \eqref{visinq} is not restrictive.
\end{remark}
\subsection{Auxiliary lemmas}
In this section, we collect all the results that are needed to prove \cref{mainth1} and \cref{mainth2}. With some cosmetic change of \cite[Proposition~C.4, Lemma~C.5, Proposition~4.5]{salortNA2022} we can get the following results:
\begin{lemma}\label{salortNAlemma}
Let $u\in W^{s,G}_{loc}(\Omega) \cap L^g_s(\mathbb{R}^n)$ be a weak solution of $\mathcal{L}u=f$ where $f \in L^1_{loc}(\Omega).$ Let $v \in L^1_{loc}(\mathbb{R}^n)$ satisfies,

    \[dist\left(supp(v), \Omega\right)>0\,\,\,\, \text{and}\,\,\,\, \int_{\mathbb{R}^n\setminus \Omega}g\left(\frac{|u(x)|}{(1+|x|)^s}\right)\frac{dx}{(1+|x|)^{n+s}}< \infty. \] For $a.e.$ Lebesgue points of $u,$ define
 \begin{align*}
h(x):=2 \int_{supp(v)}\Big[&g\left(\frac{\left|u(x)-u(y)-v(y)\right|}{\left|x-y\right|^s}\right)\frac{u(x)-u(y)-v(y)}{\left|u(x)-u(y)-v(y)\right|}\\
&-g\left(\frac{\left|u(x)-u(y)\right|}{\left|x-y\right|^s}\right)\frac{u(x)-u(y)}{\left|u(x)-u(y)\right|}\Big] K(x,y)\frac{dy}{\left|x-y\right|^s}
\end{align*}
Then $u+v \in W^{s,G}_{loc}(\Omega) \cap L^g_s(\mathbb{R}^n)$ and is a weak solution of $\mathcal{L}(u+v)=f+h$ in $\Omega.$
\end{lemma}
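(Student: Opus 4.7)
The strategy is to take a test function $\eta\in C^\infty_c(\Omega)$, write out the weak formulation for $u+v$, decompose the double integral using the disjoint decomposition $\mathbb{R}^n=A\sqcup B$ with $A:=\supp(v)$ and $B:=\mathbb{R}^n\setminus A$, then use both the separation $\dist(A,\Omega)>0$ and the weak equation for $u$ to identify the difference as $\int_\Omega h\,\eta$.

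First I would verify that $u+v\in W^{s,G}_{\mathrm{loc}}(\Omega)\cap L^g_s(\RR^n)$. Fix any bounded $\Omega'\subset\subset\Omega$ and choose an open $U$ with $\Omega'\subset\subset U$ and $\overline U\cap A=\emptyset$, which is possible thanks to $\dist(A,\Omega)>0$. Then $v\equiv 0$ on $U$, so $(u+v)|_U=u|_U\in W^{s,G}(U)$. The tail integrability of $u+v$ follows from the triangle inequality, the integrability hypothesis on $v$, and the $\Delta_2$-estimates for $g$ recorded in the preliminaries.

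Next, for $\eta\in C^\infty_c(\Omega)$ set $\phi_{u+v}(x,y):= g\!\left(\tfrac{|(u+v)(x)-(u+v)(y)|}{|x-y|^s}\right)\tfrac{(u+v)(x)-(u+v)(y)}{|(u+v)(x)-(u+v)(y)|}$ and $\phi_u(x,y)$ analogously, and split
\begin{equation*}
\iint_{\RR^n\times\RR^n}\phi_{u+v}(x,y)(\eta(x)-\eta(y))K(x,y)\frac{dx\,dy}{|x-y|^s}
\end{equation*}
according to the four pieces $B\times B$, $A\times B$, $B\times A$, $A\times A$. Since $\supp(\eta)\subset\Omega$ and $\Omega\cap A=\emptyset$, we have $\eta\equiv0$ on $A$; thus the $A\times A$-contribution vanishes and the $A\times B$-term reduces to $-\int_A\!\int_B\phi_{u+v}\,\eta(y)\,K\,\frac{dx\,dy}{|x-y|^s}$, while $B\times A$ reduces to $\int_B\!\int_A\phi_{u+v}\,\eta(x)\,K\,\frac{dx\,dy}{|x-y|^s}$. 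By the symmetry $K(x,y)=K(y,x)$ together with the antisymmetry of $\phi_{u+v}$ under $(x,y)\leftrightarrow(y,x)$, these two last terms coincide, yielding a factor $2$. On $B\times B$, $v(x)=v(y)=0$, so $\phi_{u+v}=\phi_u$ there. Performing the same splitting for the weak equation $\mathcal L u=f$ and subtracting, every $B\times B$-piece cancels, and on $B\times A$ only $v(y)$ survives inside $\phi_{u+v}$, producing exactly
\begin{equation*}
2\int_\Omega\eta(x)\!\left[\int_{A}\bigl(\phi_{u+v}(x,y)-\phi_u(x,y)\bigr)K(x,y)\frac{dy}{|x-y|^s}\right]dx=\int_\Omega h(x)\eta(x)\,dx.
\end{equation*}
Combining this with $\int_\Omega f\eta\,dx$ gives the weak formulation $\mathcal L(u+v)=f+h$.

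The main obstacle is purely technical: one must justify the rearrangement of the double integral (i.e.\ check absolute integrability of each piece separately so Fubini applies and the splitting is lawful). This is where the hypothesis $\dist(A,\Omega)>0$ is essential, since it forces $|x-y|$ to stay uniformly away from $0$ on every mixed $A$--$\Omega$ piece, and combined with the monotonicity of $g$, the $\Delta_2$/growth inequalities of \cref{condition on G}, the assumed tail integrability, and $u\in L^g_s(\RR^n)$, one checks that the integrand on the cross-terms is dominated by an $L^1$ function against $\eta\in C^\infty_c(\Omega)$. In particular this shows $h\in L^1_{\mathrm{loc}}(\Omega)$, so the right-hand side makes sense and the identification $\mathcal L(u+v)=f+h$ holds in the weak sense. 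The extension from $\eta\in C^\infty_c$ to the full test space is standard by density, and the case of unbounded $\Omega$ follows by localizing to any bounded $\Omega'\subset\Omega$ as in the definition of weak solution.
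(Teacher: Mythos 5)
The paper does not actually give a proof of this lemma: it simply cites \cite{salortNA2022} for it. Your argument is the standard one for such a ``perturbation by a compactly-supported-away-from-$\Omega$ function'' lemma and is correct in structure: split $\mathbb{R}^n\times\mathbb{R}^n$ into $B\times B$, $A\times B$, $B\times A$, $A\times A$ with $A=\supp v$, kill $A\times A$ using $\eta\equiv 0$ on $A$, use symmetry of $K$ and antisymmetry of the integrand to merge the cross-terms with a factor $2$, observe $\phi_{u+v}=\phi_u$ on $B\times B$ since $v\equiv 0$ there, subtract the weak formulation for $u$, and identify the residue as $\int_\Omega h\,\eta$. This is consistent with what the cited proof does, modulo notation.

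One point you tacitly correct but should be aware of: the integrability hypothesis as written in the lemma,
\begin{equation*}
\int_{\mathbb{R}^n\setminus\Omega} g\!\left(\frac{|u(x)|}{(1+|x|)^s}\right)\frac{dx}{(1+|x|)^{n+s}}<\infty,
\end{equation*}
involves $u$ and is therefore redundant, since $u\in L^g_s(\mathbb{R}^n)$ is already assumed. Your proof needs, and implicitly uses, this condition with $v$ in place of $u$ (equivalently $v\in L^g_s(\mathbb{R}^n)$), both to check $u+v\in L^g_s(\mathbb{R}^n)$ and to show that $h$ is well-defined and in $L^1_{\mathrm{loc}}(\Omega)$ via the $\Delta_2$ estimates and $\dist(\supp v,\Omega)>0$. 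This is almost certainly a typo in the statement, and your reading is the correct one; just make that correction explicit so the chain of estimates you sketch in the final paragraph actually closes.
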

\begin{proposition}\label{prop2.1}
Let $\Omega$ be a bounded domain and $u, v \in W^{s, G}(\Omega)\cap L^g_s(\mathbb{R}^n)$ such that $u\leq v$ in $\Omega^c.$ If 
\begin{align*}
&\int_{\mathbb{R}^n} \int_{\mathbb{R}^n} g\left(\frac{|u(x)-u(y)|}{|x-y|^s}\right) \frac{u(x)-u(y)}{|u(x)-u(y)|} \frac{(\eta(x)-\eta(y))K(x,y)}{|x-y|^s}dxdy\\
&\leq \int_{\mathbb{R}^n} \int_{\mathbb{R}^n} g\left(\frac{|v(x)-v(y)|}{|x-y|^s}\right) \frac{v(x)-v(y)}{|v(x)-v(y)|} \frac{(\eta(x)-\eta(y))K(x,y)}{|x-y|^s}dxdy
\end{align*}
holds for all $\eta \in W^{s, G}_0,$ $\eta \geq 0,$ then $u\leq v$ in $\Omega.$
\end{proposition}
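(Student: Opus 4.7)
\emph{Proof proposal.} The natural approach is a Browder/Minty-style monotonicity argument with test function $\eta=(u-v)_+$. I would proceed in four steps.

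\textbf{Step 1: Admissibility of the test function.} Set $w\df u-v$ and $\eta\df w_+=\max\{u-v,0\}$. Because $u\le v$ in $\Omega^c$, we have $\eta\equiv 0$ on $\Omega^c$, so $\eta$ has support contained in $\overline{\Omega}$. Since $u,v\in W^{s,G}(\Omega)\cap L^g_s(\RR^n)$ and $\Omega$ is bounded, the truncation $\eta$ lies in $W^{s,G}_0(\Omega)$: the $\Omega\times\Omega$ part of the Gagliardo double integral is controlled by $[u]_{s,G,\Omega}+[v]_{s,G,\Omega}$ via the $1$-Lipschitz property of $t\mapsto t_+$ combined with the $\Delta_2$ condition on $G$, and the cross terms over $\Omega\times\Omega^c$ reduce to an integral of $G\big(\eta(x)/|x-y|^s\big)|x-y|^{-n}$ which converges because $\eta$ is bounded (or at least tail-integrable) on the compact support. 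Thus $\eta\in W^{s,G}_0(\Omega)$ is a legitimate choice, and $\eta\geq 0$.

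\textbf{Step 2: Monotonicity of the flux and subtraction.} Inserting this $\eta$ into the given inequality and subtracting one side from the other yields
\[
\int_{\RR^n}\!\!\int_{\RR^n}\bigl[\Phi(u;x,y)-\Phi(v;x,y)\bigr]\bigl(\eta(x)-\eta(y)\bigr)\frac{K(x,y)}{|x-y|^s}\,dx\,dy\ \le\ 0,
\]
where $\Phi(u;x,y)\df g\!\left(\tfrac{|u(x)-u(y)|}{|x-y|^s}\right)\sign(u(x)-u(y))$. Since $g$ is strictly increasing on $[0,\infty)$ and odd on $\RR$, the map $\tau\mapsto g(|\tau|/h)\sign(\tau)$ is strictly increasing for every fixed $h>0$. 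In particular, denoting $a\df u(x)-u(y)$, $b\df v(x)-v(y)$, the sign of $\Phi(u;x,y)-\Phi(v;x,y)$ coincides with the sign of $a-b=w(x)-w(y)$.

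\textbf{Step 3: Pointwise non-negativity of the integrand.} A case analysis on the signs of $w(x)$ and $w(y)$ shows that
\[
\bigl(w(x)-w(y)\bigr)\bigl(w_+(x)-w_+(y)\bigr)\ge \bigl(w_+(x)-w_+(y)\bigr)^2\ge 0,
\]
and moreover the two factors $w(x)-w(y)$ and $w_+(x)-w_+(y)$ carry the same sign whenever $w_+(x)-w_+(y)\neq 0$. Combined with Step~2, this yields
\[
\bigl[\Phi(u;x,y)-\Phi(v;x,y)\bigr]\bigl(\eta(x)-\eta(y)\bigr)\ge 0\quad\text{a.e.\ in }\RR^n\times\RR^n.
\]
Since $K(x,y)/|x-y|^s\ge \lambda/|x-y|^{n+s}>0$ by the lower kernel bound, the integrand is non-negative a.e.\ and the inequality in Step~2 forces it to vanish a.e.

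\textbf{Step 4: Conclusion.} Suppose for contradiction that $A\df\{x\in\Omega:u(x)>v(x)\}$ has positive Lebesgue measure. For $(x,y)\in A\times\Omega^c$ we have $\eta(x)>0=\eta(y)$, so strict monotonicity of the flux in Step~2 forces $w(x)=w(y)$; but $w(x)>0\ge w(y)$, a contradiction on a set of positive product measure (using that $\Omega$ bounded implies $|\Omega^c|=\infty$). Hence $|A|=0$, i.e.\ $u\le v$ a.e.\ in $\Omega$.

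\emph{Anticipated obstacle.} The only non-routine point is Step~1: confirming that $(u-v)_+$ is an admissible test function in $W^{s,G}_0(\Omega)$ (rather than merely in a localized space) given only that $u,v\in W^{s,G}(\Omega)\cap L^g_s(\RR^n)$. I expect this to follow from the $\Delta_2$ condition together with the contraction property of $(\cdot)_+$ and the tail control provided by $L^g_s(\RR^n)$, likely invoking one of the density/approximation lemmas developed in \cite{salortNA2022} implicit in \cref{salortNAlemma}.
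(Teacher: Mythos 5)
Your proposal is correct and is precisely the standard Browder--Minty monotonicity argument (test with $(u-v)_+$, use strict monotonicity of $\tau\mapsto g(|\tau|/h)\operatorname{sign}\tau$, conclude via non-negativity of the integrand) that the paper invokes by citing Proposition~4.5 of Bonder--Salort--Vivas rather than proving directly. You also correctly isolated the only delicate technical point, namely the admissibility of $(u-v)_+$ as a $W^{s,G}_0(\Omega)$ test function given only $u,v\in W^{s,G}(\Omega)\cap L^g_s(\mathbb{R}^n)$, which is exactly where the cited reference's approximation lemmas are needed.
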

\begin{remark}
Comparison principle also holds for non-negative subsolution and supersolution of \eqref{nonlocal equation} with $c(x) \leq 0.$
\end{remark}
\begin{proposition}\label{prop2.2}
Let $\Omega \subset \mathbb{R}^n$ be a bounded domain with $C^{1,1}$ boundary and $\delta(x):=dist(x, \Omega^c).$ Then there exists a $\rho>0$ depending on $s, \Omega$ such that
\[\mathcal{L}\delta^s=f\,\,\,\, weakly\,\,\,\,in\,\,\,\, \Omega_{\rho} \] for some $f \in L^{\infty}(\Omega_{\rho})$ where $\Omega_{\rho}:=\left\{x\in \Omega | \delta(x) < \rho\right\}.$
\end{proposition}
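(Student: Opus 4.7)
The plan is to produce the bound pointwise by localizing near the boundary, comparing $\delta^s$ with the model half-space distance $(x_n)_+^s$, and controlling the error through the $C^{1,1}$ regularity of $\partial\Omega$. Fix $\rho>0$ smaller than the uniform interior/exterior ball radius of $\Omega$. For $x\in\Omega_\rho$, let $y_x\in\partial\Omega$ be the unique nearest boundary point. Choose a $C^{1,1}$ diffeomorphism $\Phi$ that flattens $\partial\Omega$ near $y_x$, so that in the new coordinates $\Omega$ locally becomes $\{z_n>\varphi(z')\}$ with $\varphi\in C^{1,1}$, $\varphi(0)=\nabla\varphi(0)=0$. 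Then $\delta(\Phi^{-1}(z))=z_n-\varphi(z')+O(|z|^3)$ on a fixed neighborhood, and the $C^{1,1}$ change of variables preserves the translation-invariant kernel assumption up to a bounded, continuous factor — enough for the forthcoming estimates because of the two-sided bound on $K$.

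Next, I would analyze the model $\mathcal{L}\bigl((x_n)_+^s\bigr)$ at a point $x=(0',t)$ of the half-space $H=\{x_n>0\}$, for $0<t\le \rho$. Splitting the principal-value integral into an inner piece on $B_{2t}(x)$ and an outer piece on its complement, the inner piece is handled by exploiting the symmetry $y\mapsto 2x-y$: it cancels the leading singular part of the $g$-increment, after which the residual integrand is bounded pointwise by $g\bigl(C|x-y|^{1-s}t^{s-1}\bigr)$ against integrable weights, which is finite uniformly in $t$ thanks to \eqref{condition on g}, the $\Delta_2$ condition, and the two-sided kernel bound. The outer piece is estimated directly: since $(y_n)_+^s\lesssim |y-x|^s+t^s$ and $|x-y|^{-n-s}$ decays integrably against $g$ composed with a linear function of these, the tail is bounded uniformly in $t\in(0,\rho]$. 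Thus $\bigl|\mathcal{L}\bigl((x_n)_+^s\bigr)(x)\bigr|\le C(s,p_g,q_g,\lambda,\Lambda)$ up to the boundary of $H$.

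Finally, I would transfer the half-space bound to $\delta^s$ by a perturbation argument. Writing $\delta^s$ in the straightened coordinates as $(z_n-\varphi(z'))_+^s$ and using the mean value form of the $g$-increment together with $\varphi\in C^{1,1}$ with $\varphi(0)=\nabla\varphi(0)=0$, the difference between $\mathcal{L}\delta^s$ and $\mathcal{L}((x_n)_+^s)$ reduces to integrals against a perturbation of size $O(|z'|^2)$ times weights that are integrable by the kernel bounds and \eqref{condition on g}. This yields a bounded function on $\Omega_\rho$. A standard cutoff/gluing with the far field (treated as a nonlocal tail, finite because $\delta^s$ is bounded and compactly supported) delivers $f\in L^\infty(\Omega_\rho)$, and an integration by parts using $\delta^s\in W^{s,G}_{loc}(\Omega_\rho)\cap L^g_s(\mathbb{R}^n)$ (a direct consequence of $\delta^s\in C^{0,s}(\mathbb{R}^n)$ with compact support) gives the weak formulation.

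The main obstacle is the half-space computation near the boundary, namely showing that $\mathcal{L}\bigl((x_n)_+^s\bigr)(x)$ stays bounded as $x_n\to 0^+$. The exponent $s$ in the profile and the symmetry in the tangential variables are precisely tuned to cancel the worst-case divergence of the $g$-increment; once this cancellation is quantified, the remaining steps are routine applications of the $\Delta_2$ condition, the power bounds in \eqref{condition on g}, the two-sided kernel bound, and the $C^{1,1}$ regularity of $\partial\Omega$.
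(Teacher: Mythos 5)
Your overall plan — reduce to the half-space profile $(x_n)_+^s$, bound $\mathcal{L}$ of this profile up to the boundary, and then transfer to the $C^{1,1}$ case by a perturbation plus a tail estimate — is the standard strategy, and it is essentially the route taken in the reference the paper points to (the paper itself does not reprove Proposition~2.2; it refers to Bonder--Salort--Vivas \cite{salortNA2022}, Prop.~C.4, Lem.~C.5, Prop.~4.5, as a minor adaptation). However, the execution of the crucial half-space step contains a genuine gap. Set $w(y)=(y_n)_+^s$. For a scale-invariant kernel the operator is exactly $(-s)$-homogeneous on this profile: $\mathcal{L}w(te_n)=t^{-s}\mathcal{L}w(e_n)$, and for a general translation-invariant kernel with the two-sided bound one still gets $\lvert\mathcal{L}w(te_n)\rvert\asymp t^{-s}$ up to multiplicative constants. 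Your decomposition into $B_{2t}(x)$ and its complement is itself scale-invariant, so \emph{each} of your two pieces inherits this $t^{-s}$ scaling. In particular the claim that the symmetrized inner residual is controlled by ``$g\bigl(C|x-y|^{1-s}t^{s-1}\bigr)$ against integrable weights, finite uniformly in $t$'' cannot be right: one computes
\[
\int_{B_{2t}(x)} g\bigl(Ct^{s-1}|x-y|^{1-s}\bigr)\,|x-y|^{-n-s}\,dy \;\asymp\; t^{-s},
\]
and the tail over $B_{2t}(x)^c$ scales the same way. Neither piece is uniformly bounded in $t$; they are only bounded \emph{together}, and that requires establishing the algebraic identity $\mathcal{L}w\equiv 0$ in $\{x_n>0\}$ (the $g$-analogue of the $s$-harmonicity of $(x_n)_+^s$, reducing to the fractional $p$-Laplacian computation of \cite{IMS} for each power $g(t)=t^{p-1}$). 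That identity is the heart of the proposition, and your proposal never states or uses it, instead asserting boundedness of the two pieces separately. As written, the argument cannot close.

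Two secondary remarks. First, the paper's standing hypothesis that $g$ is convex (stated immediately after \eqref{condition on G}, and flagged as the analogue of $p\geq 2$ in \cite{IMS}) is precisely what makes the $C^{1,1}$ perturbation step work: the error between $\delta^s$ and a rotated half-space profile is quadratic in the tangential variable, and controlling the resulting $g$-increment without this degeneracy assumption fails. Your plan never invokes convexity, which is another signal that the claimed estimates are too coarse. Second, applying a general $C^{1,1}$ flattening diffeomorphism is both unnecessary and problematic here, since it destroys the translation invariance of $K$ assumed in the definition of $\mathcal{L}$. A rigid motion placing the boundary as a graph $\{z_n=\varphi(z')\}$ with $\varphi\in C^{1,1}$, $\varphi(0)=\nabla\varphi(0)=0$, together with the expansion $\delta(z)=(z_n-\varphi(z'))\bigl(1+O(|z'|^2)\bigr)$, keeps the kernel hypotheses intact and is what the perturbation argument actually uses.
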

Further, we state the results which can be obtained by slightly modifying \cite[Lemma~3.8, Lemma~3.9]{delpezzoADE}. 
\begin{lemma}\label{regular peturbation}
Let $\Omega \subset \mathbb{R}^n$ be a bounded domain, $B_r(x_0) \subset \Omega$ and $\psi \in C^2(B_r(x_0))\cap L^{\infty}(\mathbb{R}^n)$ satisfying Definition 2.4, 3 (i) or (ii) with $\beta > \frac{sp}{p-1}.$ Then for all $\epsilon >0$ and $\tilde{\rho} >0,$ there exist $\tilde{\theta}>0, \rho\in (0, \tilde{\rho})$ and $\eta \in C^2_c(B_{\frac{\rho}{2}}(x_0))$ with $0\leq \eta \leq 1$ and $\eta(x_0)=1 $ such that $\psi_{\theta}(x)=\psi(x)+\theta \eta(x)$ satisfies 
\begin{align*}
    \sup_{B_{\rho}(x_0)}\left|\mathcal{L}\psi-\mathcal{L}\psi_{\theta}\right| < \epsilon\,\,\,\, \text{for all}\,\,\,\, 0\leq \theta < \tilde{\theta}.
\end{align*}
\end{lemma}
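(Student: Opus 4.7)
The goal is to bound $\sup_{B_\rho(x_0)}|\mathcal{L}\psi-\mathcal{L}\psi_\theta|$ where $\psi_\theta-\psi=\theta\eta$ is supported in $B_{\rho/2}(x_0)$. Using the odd extension of $g$, one may rewrite the integrand of $\mathcal{L}$ in the cleaner form $g\bigl((\psi(x)-\psi(y))/|x-y|^s\bigr)K(x,y)/|x-y|^s$. I would start by expressing
\begin{align*}
\mathcal{L}\psi_\theta(x)-\mathcal{L}\psi(x)=\text{p.v.}\int_{\mathbb{R}^n}\Bigl[g\Bigl(\tfrac{\psi_\theta(x)-\psi_\theta(y)}{|x-y|^s}\Bigr)-g\Bigl(\tfrac{\psi(x)-\psi(y)}{|x-y|^s}\Bigr)\Bigr]\frac{K(x,y)\,dy}{|x-y|^s},
\end{align*}
and applying the mean value theorem in $t$ to $g(t/|x-y|^s)$: the integrand is controlled by $\theta|\eta(x)-\eta(y)|\,g'(\tilde t(x,y)/|x-y|^s)\,K(x,y)/|x-y|^{2s}$, where $\tilde t(x,y)$ lies between $\psi(x)-\psi(y)$ and $\psi_\theta(x)-\psi_\theta(y)$. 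The structural bound $g'(t)\le q_g\,g(t)/t$ coming from \eqref{condition on g} then converts this into an expression involving $g(\tilde t/|x-y|^s)\,K/|x-y|^{2s}$, for which we already have good control via the $L^g_s(\mathbb{R}^n)$ hypothesis.

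Next I would split the integration into $A_1=\{|x-y|\ge\rho\}$ and $A_2=\{|x-y|<\rho\}$. In $A_1$ the kernel is non-singular and $|\eta(x)-\eta(y)|\le 2$, so using $\psi\in L^\infty(\mathbb{R}^n)$ together with the tail integrability of $g(|\psi(\cdot)|/|\cdot|^s)|\cdot|^{-n-s}$ and Lipschitz continuity of the nonlinearity away from the diagonal, one obtains a bound of the form $C(\rho,\psi)\,\theta$, which is $<\epsilon/2$ once $\tilde\theta$ is small. For $A_2$ I would use the local Taylor expansion of both $\psi$ and $\eta$ in combination with Definition~\ref{defn2.2}(3): under (i), either $D\psi(x_0)\neq 0$ so that $\psi$ behaves linearly on $B_\rho(x_0)$ for small $\rho$, or $p>2/(2-s)$; in either sub-case the singular integral on $A_2$ is $O(\rho^{\alpha})$ for some $\alpha>0$ and can be made less than $\epsilon/2$. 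Under (ii) the $C^2_\beta$ assumption with $\beta>sp/(p-1)$ is precisely the integrability threshold that renders $\int_{A_2}g(|\psi(x)-\psi(y)|/|x-y|^s)\,|x-y|^{-n-s}\,dy$ finite and vanishing as $\rho\to 0$.

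The main technical obstacle is the inner region $A_2$ in case (ii), where $D\psi(x_0)=0$ and the pointwise bound on the integrand is only the degenerate one coming from $C^2_\beta$. Here one must also ensure that the bump $\eta$ does not destroy the local structure governing this estimate: I would choose $\eta$ to vanish to order $\ge\beta$ at $x_0$, so that $\psi_\theta$ admits a $C^2_\beta$-type bound on $B_\rho(x_0)$ with constants uniform in $\theta\in[0,\tilde\theta)$ for $\tilde\theta$ small; this in turn ensures that the mean-value intermediate value $\tilde t(x,y)$ inherits the same quantitative upper estimate used for $\psi$ alone. With all constants tracked, one chooses $\rho\in(0,\tilde\rho)$ first, to handle the $A_2$-contribution uniformly in $\theta$, and then $\tilde\theta$ to handle the $A_1$-contribution, yielding $\sup_{B_\rho(x_0)}|\mathcal{L}\psi-\mathcal{L}\psi_\theta|<\epsilon$ for every $0\le\theta<\tilde\theta$.
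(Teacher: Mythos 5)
Your overall strategy — write $\mathcal{L}\psi_\theta-\mathcal{L}\psi$ as an integral, linearize $g$ via the mean value theorem, and split the integration domain into a far region (controlled by $\psi\in L^\infty(\mathbb{R}^n)$ and the tail decay of the kernel) and a near region (controlled by the pointwise $C^2$ and $C^2_\beta$ structure of $\psi$) — is the right framework, and it is essentially how the cited source, de Borb\'on--Del Pezzo--Ochoa, handles this perturbation lemma. However, there is a concrete error in your treatment of the near region. You propose to ``choose $\eta$ to vanish to order $\ge\beta$ at $x_0$.'' This forces $\eta(x_0)=0$, which is flatly incompatible with the requirement $\eta(x_0)=1$ in the statement of the lemma. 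Since $\eta$ must attain its maximum value $1$ at $x_0$, it cannot vanish there.

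The correct way to ensure that the degenerate $C^2_\beta$ estimate survives the perturbation is the \emph{opposite} of what you wrote: one should take $\eta\equiv 1$ on a smaller ball $B_{\rho'}(x_0)$ with $0<\rho'<\rho/2$, so that $\eta$ is \emph{constant} near $x_0$. On that ball $\psi_\theta=\psi+\theta$ has exactly the same gradient, Hessian, and critical set as $\psi$, so the $C^2_\beta$ bound for $\psi_\theta$ holds with the same constants, trivially uniformly in $\theta$. With your choice of $\eta$, by contrast, $D\psi_\theta=D\psi+\theta D\eta$ would acquire an extra term near $x_0$, and the lower bound $|D\psi_\theta(x)|\gtrsim\min\{\delta_{\psi_\theta}(x),1\}^{\beta-1}$ required by the $C^2_\beta$ class would in general be destroyed (or at best require a separate argument you have not supplied); the ``constants uniform in $\theta$'' claim is therefore unjustified. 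A secondary point worth flagging: after the conversion $g'(t)\le q_g\,g(t)/t$ the integrand acquires a factor $|x-y|^s/\tilde t$, not just $g(\tilde t/|x-y|^s)K/|x-y|^{2s}$ as written; and in the near region one should really exploit the principal-value antisymmetrization (or the locally constant choice of $\eta$, which makes the near-field integrand vanish identically for $x$ near $x_0$) rather than the naive absolute bound, since the latter only converges under an extra restriction like $p>1/(1-s)$ that is not among the hypotheses.
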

\begin{lemma}\label{Lemma 2.3}
Let $\Omega \subset \mathbb{R}^n$ be a bounded domain, $B_r(x_0) \subset \Omega$ and $\psi \in C^2(B_r(x_0))\cap L^g_s(\mathbb{R}^n)$. Furthermore, assume that $\psi \in C^2_{\beta}(B_r(x_0))$ for some $\beta > \frac{sp}{p-1}$ when $1<p\leq \frac{2}{2-s}$ and $D\psi(x_0)=0$ where $x_0$ is an isolated point in $B_r(x_0).$ Then $\mathcal{L}\psi$ is continuous in $B_r(x_0).$
\end{lemma}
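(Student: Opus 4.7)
The plan is to adapt the argument of \cite[Lemma~3.9]{delpezzoADE} from the fractional $p$-Laplacian to the Orlicz setting dictated by \eqref{condition on g}. Fix $x_{\ast}\in B_r(x_0)$ and a sequence $x_k\to x_{\ast}$ in $B_r(x_0)$; the goal is $\mathcal{L}\psi(x_k)\to\mathcal{L}\psi(x_{\ast})$, and the vehicle will be dominated convergence. Using translation invariance write $K(x,y)=\tilde K(y-x)$ with $\tilde K(z):=K(0,z)$, observe $\tilde K(-z)=\tilde K(z)$, substitute $z=y-x$ and reflect $z\mapsto -z$ to obtain the symmetric representation
\[
2\mathcal{L}\psi(x) = \int_{\mathbb{R}^n} I(x,z)\,\frac{\tilde K(z)}{|z|^s}\,dz,\qquad I(x,z):=g\!\left(\tfrac{|\delta^{+}|}{|z|^s}\right)\sign(\delta^{+})+g\!\left(\tfrac{|\delta^{-}|}{|z|^s}\right)\sign(\delta^{-}),
\]
where $\delta^{\pm}:=\psi(x)-\psi(x\pm z)$. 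The symmetrization absorbs the principal value and exposes the near-$z=0$ cancellation. Choose $\rho>0$ with $B_{2\rho}(x_{\ast})\subset B_r(x_0)$ and split the domain into $\{|z|>\rho\}$ and $\{|z|\le\rho\}$.

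For $\{|z|>\rho\}$, continuity of the integrand in $x$ follows from continuity of $\psi$, $\tilde K$ and $g$, and a uniform integrable majorant is produced by the triangle inequality, the $\Delta_2$-property of $g$, the hypothesis $\psi\in L^g_s(\mathbb{R}^n)$, and $\tilde K(z)\le\Lambda|z|^{-n}$. For $\{|z|\le\rho\}$, the second-order Taylor expansion of $\psi\in C^2(B_r(x_0))$ gives $\delta^{+}+\delta^{-}=O(|z|^2)$ uniformly in $x$ near $x_{\ast}$. In Case~(i) of \cref{defn2.2}, where $D\psi(x_{\ast})\ne 0$, one has $|D\psi(x)|\ge\eta>0$ in a neighborhood, so $\sign(\delta^{+})=-\sign(\delta^{-})$ for small $|z|$; the mean value theorem, the bound $g'(t)\le q_g\, g(t)/t$ coming from \eqref{condition on g}, and $\bigl||\delta^{+}|-|\delta^{-}|\bigr|\le|\delta^{+}+\delta^{-}|=O(|z|^2)$ then give
\[
|I(x,z)|\le C\,\frac{g(|z|^{1-s})}{|z|^{1-s}}\cdot|z|^{2-s}.
\]
Combined with $\tilde K(z)/|z|^s\le\Lambda|z|^{-n-s}$ and $g(|z|^{1-s})\le g(1)|z|^{(1-s)p_g}$ for small $|z|$, this is a uniform integrable majorant on $\{|z|\le\rho\}$ and dominated convergence applies.

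In Case~(ii) of \cref{defn2.2}, where $x_{\ast}=x_0$ is an isolated critical point and $\psi\in C^2_{\beta}$ with $\beta>sp/(p-1)$, the $C^2_\beta$ estimates furnish $|D\psi(x)|\gtrsim|x-x_0|^{\beta-1}$ and $|D^2\psi(x)|\lesssim|x-x_0|^{\beta-2}$ near $x_0$. The near integral is split at the scale $|z|=|x-x_0|/2$: on $\{|z|\le|x-x_0|/2\}$ the cancellation estimate of Case~(i) applies with the gradient lower bound in place of $\eta$, while on $\{|x-x_0|/2<|z|\le\rho\}$ the direct bound $|\delta^{\pm}|\le C|z|^\beta$ together with the monotonicity and Orlicz growth of $g$ produce $|I(x,z)|\,\tilde K(z)/|z|^s\le C|z|^{(\beta-s)p_g-n-s}$, integrable precisely when $(\beta-s)p_g>s$, i.e.\ $\beta>sp/(p-1)$. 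The chief obstacle is this case: the absence of a uniform gradient lower bound near the critical point forces the two-scale splitting of the near integral, and the threshold $\beta>sp/(p-1)$ enters sharply, making transparent why the hypothesis cannot be relaxed.
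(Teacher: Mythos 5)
The paper does not give a proof of \cref{Lemma 2.3}: it simply cites de Borb\'on--Del Pezzo--Ochoa \cite{delpezzoADE}*{Lemmas~3.8 and 3.9} and states that the changes needed are cosmetic. Your reconstruction follows exactly the strategy of those lemmas --- symmetrize the principal value via the change of variables $z\mapsto -z$, split at a small scale $\rho$, control the far field using $\psi\in L^g_s(\mathbb{R}^n)$ and the upper kernel bound, and control the near field using the second-order cancellation --- so at the level of approach it matches what the paper is pointing to.

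There is, however, a concrete gap in your near-field estimate for Case~(i). The assertion that ``$\sign(\delta^{+})=-\sign(\delta^{-})$ for small $|z|$'' is not uniform in the direction of $z$. From the Taylor expansion $\delta^{\pm}=\mp D\psi(x)\cdot z+O(|z|^{2})$, the opposite-sign conclusion holds only when $|D\psi(x)\cdot z|\gtrsim|z|^{2}$; for $z$ nearly orthogonal to $D\psi(x)$ the first-order terms are dominated by the Hessian terms, and $\delta^{+},\delta^{-}$ can carry the same sign (take $D^{2}\psi$ definite to see this explicitly). On that sub-region the mean-value cancellation is unavailable. You must treat it separately: there $|\delta^{\pm}|=O(|z|^{2})$, hence $|I(x,z)|\le 2g\bigl(C|z|^{2-s}\bigr)\lesssim |z|^{(2-s)p_{g}}$, and multiplying by $\tilde K(z)/|z|^{s}\lesssim|z|^{-n-s}$ gives a power of $|z|$ that is integrable near $0$ precisely when $(2-s)p_{g}>s$, i.e.\ $p>2/(2-s)$. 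So the threshold $p>2/(2-s)$ is already needed to close Case~(i), not only Case~(ii) as your write-up suggests. Relatedly, Case~(i) of \cref{defn2.2} reads ``$p>\tfrac{2}{2-s}$ \emph{or} $D\psi(x_{0})\neq 0$'', and your argument only covers the sub-case $D\psi(x_{\ast})\neq 0$; the sub-case $p>\tfrac{2}{2-s}$ with $D\psi(x_{\ast})=0$ --- which is exactly the situation under the paper's standing convexity assumption $p>2$, and which has no uniform gradient lower bound --- is not addressed, and it is the same-sign bound above that disposes of it. (For the same reason your remark that Case~(ii) is ``the chief obstacle'' is somewhat misplaced in this paper's setting: since $g$ is assumed convex one always has $p>2>\tfrac{2}{2-s}$, so Case~(ii) never actually occurs here.) With the additional splitting of the near-field integral into the opposite-sign and same-sign sub-regions, your dominated-convergence argument does go through.
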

We prove the next lemma in the same spirit of \cite{fractional e-values}. 
\begin{lemma}\label{lemma2.4}
Let $\Omega$ be a bounded domain and $\Omega^{\prime} \subset \Omega.$ We also assume that the following inequality
\begin{align}\label{inq2.6}
    \mathcal{L}u(x) \leq f(x)\,\,\, \text{for all}\,\,\, x\in \Omega^{\prime}
\end{align}
holds for $u\in C^1_0(\mathbb{R}^n),$ $f\in C(\Omega).$ Then we have
\begin{align}\label{inq3}
    \int_{\mathbb{R}^n} \int_{\mathbb{R}^n} g\left(\frac{|u(x)-u(y)|}{|x-y|^s}\right) \frac{u(x)-u(y)}{|u(x)-u(y)|} \frac{(\eta(x)-\eta(y))K(x,y)}{|x-y|^s}dxdy \leq \int_{\Omega}f(x)\eta(x)dx
\end{align}
for all non-negative $\eta\in C^{\infty}_c(\Omega^{\prime}).$
\end{lemma}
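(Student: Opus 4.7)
The plan is to derive \eqref{inq3} from the pointwise inequality \eqref{inq2.6} by testing against the non-negative function $\eta$, and then symmetrising the resulting single integral into the bilinear form on the left of \eqref{inq3} using the kernel symmetry $K(x,y)=K(y,x)$.

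Fix $\eta\in C^\infty_c(\Omega')$ with $\eta\geq 0$. First, I would multiply \eqref{inq2.6} by $\eta(x)\geq 0$ (preserving the inequality) and integrate over $\Omega'$, which coincides with integrating over $\mathbb{R}^n$ by the compact support of $\eta$ in $\Omega'$:
\begin{equation*}
\int_{\mathbb{R}^n}\mathcal{L}u(x)\,\eta(x)\,dx\;\leq\;\int_{\Omega'}f(x)\eta(x)\,dx\;=\;\int_{\Omega}f(x)\eta(x)\,dx.
\end{equation*}
Well-definedness of the left side is not an issue: $u\in C^1_0(\mathbb{R}^n)$ together with \cref{Lemma 2.3} yield continuity of $\mathcal{L}u$ on $\Omega'$.

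Next, setting $A(x,y):=g\bigl(\tfrac{|u(x)-u(y)|}{|x-y|^s}\bigr)\tfrac{u(x)-u(y)}{|u(x)-u(y)|}\tfrac{K(x,y)}{|x-y|^s}$, I would observe that the symmetry of $K$ together with the antisymmetry of the sign factor force $A(y,x)=-A(x,y)$. Working first on the truncated set $\{|x-y|>\varepsilon\}$, where the integrand is bounded and Fubini applies trivially, a relabelling $x\leftrightarrow y$ delivers
\begin{equation*}
\int_{\mathbb{R}^n}\int_{|x-y|>\varepsilon}A(x,y)\eta(x)\,dy\,dx=\tfrac{1}{2}\int_{\mathbb{R}^n}\int_{|x-y|>\varepsilon}A(x,y)(\eta(x)-\eta(y))\,dy\,dx.
\end{equation*}
As $\varepsilon\to 0^+$, the left-hand side converges to $\int\mathcal{L}u\,\eta\,dx$ by definition. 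On the right, the factor $(\eta(x)-\eta(y))$ contributes an extra $|x-y|$ via Lipschitz continuity of $\eta$, while $g(|u(x)-u(y)|/|x-y|^s)\lesssim|x-y|^{(1-s)p_g}$ near the diagonal by the $C^1$ regularity of $u$ combined with the bound $g(t)\lesssim t^{p_g}$ coming from \eqref{condition on g}. This renders the symmetric integrand absolutely integrable over all of $\mathbb{R}^{2n}$, so dominated convergence delivers the full symmetric double integral and, via the displayed inequality above, produces \eqref{inq3}.

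The main technical point is the passage $\varepsilon\to 0^+$, namely showing that the truncated symmetric identity survives in the limit. This in turn hinges on the absolute integrability of the symmetric form near the diagonal, which rests on the Lipschitz cancellation in $\eta(x)-\eta(y)$, the $C^1_0$ regularity of $u$, and the structural inequality $p_g>1$ from \eqref{condition on g}; the tail contribution is harmless since $u$ and $\eta$ have compact support, which makes the integrand decay sufficiently fast at infinity.
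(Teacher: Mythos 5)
Your proposal follows the same route as the paper's proof: multiply the pointwise inequality by $\eta\geq 0$, integrate, and symmetrize using the antisymmetry $A(y,x)=-A(x,y)$ arising from the odd sign factor and the symmetry of $K$. The paper packages this as ``integrate, change variables, add the two inequalities,'' which is literally identical to your ``insert a factor $\tfrac12$ and write it as the symmetric form.'' Your additional care in first truncating at $\{|x-y|>\varepsilon\}$ and then passing $\varepsilon\to 0^+$ by dominated convergence, using the Lipschitz gain from $\eta(x)-\eta(y)$ together with $g(t)\lesssim t^{p_g}$ for small $t$ to control the symmetric integrand near the diagonal, is a genuine improvement over the paper's more cavalier treatment of the principal value.

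Two small caveats. First, your appeal to \cref{Lemma 2.3} for continuity of $\mathcal{L}u$ is not quite on target: that lemma asks for $\psi\in C^2(B_r(x_0))$ (and further structure in the degenerate regime), whereas here $u$ is only $C^1_0$. The continuity/integrability you need is in fact supplied by the diagonal estimate you derive afterwards, so this citation is dispensable rather than a real gap. Second, and more to the point, note that your argument --- exactly like the paper's --- actually yields
\begin{equation*}
\int_{\mathbb{R}^n}\!\int_{\mathbb{R}^n} g\!\left(\tfrac{|u(x)-u(y)|}{|x-y|^s}\right)\tfrac{u(x)-u(y)}{|u(x)-u(y)|}\,\tfrac{(\eta(x)-\eta(y))K(x,y)}{|x-y|^s}\,dx\,dy \;\leq\; 2\int_{\Omega}f\,\eta\,dx ,
\end{equation*}
with a factor $2$ on the right. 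You write $\int\mathcal{L}u\,\eta = \tfrac12\iint A\,(\eta(x)-\eta(y))$ and $\int\mathcal{L}u\,\eta\leq\int f\eta$, which gives the inequality with the extra factor of $2$, not \eqref{inq3} as stated; in the paper, adding \eqref{inq1} and \eqref{inq2} produces the same doubled right-hand side. This is a latent constant discrepancy in the lemma's statement (harmless in its downstream use in \cref{VWlemma} when $\int c\,g(u)\eta\leq 0$, but worth flagging); your proof would be cleaner if you either tracked the $2$ explicitly or adjusted the statement by the missing factor $\tfrac12$ rather than asserting \eqref{inq3} outright.
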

\begin{proof}
Multiplying the above inequality \eqref{inq2.6} with $\eta(x)$ and integrating over $\Omega^{\prime}$ we get
\begin{align} \label{inq1}
    \int_{\Omega^{\prime}} \int_{\mathbb{R}^n} g\left(\frac{|u(x)-u(y)|}{|x-y|^s}\right) \frac{u(x)-u(y)}{|u(x)-u(y)|}\frac{\eta(x)}{|x-y|^s} K(x,y)\frac{dydx}{|x-y|^s}\leq \int_{\Omega}f(x)\eta(x)dx
\end{align}
Since $Supp(\eta)$ lies in $\Omega^{\prime},$ the outer integral of the above inequality can be replaced by $\mathbb{R}^n$ and a change of variable gives
\begin{align}\label{inq2}
    -\int_{\mathbb{R}^n} \int_{\mathbb{R}^n} g\left(\frac{|u(x)-u(y)|}{|x-y|^s}\right) \frac{u(x)-u(y)}{|u(x)-u(y)|}\frac{\eta(y)}{|x-y|^s} K(x,y)\frac{dxdy}{|x-y|^s}\leq \int_{\Omega}f(y)\eta(y)dy.
\end{align}
Adding the inequalities \eqref{inq1}-\eqref{inq2} we get \eqref{inq3}.
\end{proof}

Now we prove a logarithmic estimate similar to \cite{byun2022}.
\begin{proposition}\label{Loglemma}
Let $\Omega$ be a bounded domain and $c\in L^1_{loc}(\Omega).$ If $u \in W^{s,G}_{loc}(\Omega) \cap L^g_s(\mathbb{R}^n)$ is a weak supersolution of $\eqref{nonlocal equation}$ and $u \geq 0$ in $B_R(x_0) \subset \Omega,$ then for any $0<\alpha<1$ and $0<r <\frac{R}{2}, $ we have the following estimate
\begin{align}\label{loglemma}
    \int_{B_r} \int_{B_r} \left|\frac{log(u(x)+\alpha)}{log(u(y)+\alpha)}\right| \frac{dxdy}{|x-y|^n} \leq Cr^n\left\{1+ \frac{r^s}{g(\frac{\alpha}{r^s})} \textnormal{Tail}(u_{-}, x_0, R)\right\} + K_0 ||c||_{L^1(B_R)}
\end{align}
for some $C, K_0$ depending on $n,s,p,q, \lambda, \Lambda, diam(\Omega).$
\end{proposition}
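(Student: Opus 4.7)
The plan is to test the weak supersolution formulation with a nonnegative function that decreases in $u$, so the resulting cancellation produces a logarithmic integrand in the local piece. Following the fractional $g$-Laplacian strategy of \cite{byun2022}, I take
\[
\eta(x)\df \varphi(x)^{q}\bigl(u_{M}(x)+\alpha\bigr)^{1-q},
\]
where $u_{M}\df\min\{u,M\}$ is a truncation to be sent to infinity, $q$ is the upper growth index from \eqref{condition on G}, and $\varphi\in C^{\infty}_{c}(B_{3r/2}(x_{0}))$ is a cutoff with $\varphi\equiv1$ on $B_{r}(x_{0})$, $0\le\varphi\le1$ and $|\nabla\varphi|\lesssim r^{-1}$. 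Since $u\ge0$ on $B_{R}\supset\operatorname{supp}\varphi$ and $u_{M}$ is bounded, $\eta$ is an admissible nonnegative test function in \eqref{weak solution} after a standard density argument. The choice of the exponent $q$ is what allows the nonlinear algebra to close.

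After inserting $\eta$ I split the double integral into the local piece over $B_{r}\times B_{r}$ and the long-range pieces over $B_{r}\times(B_{R}\setminus B_{r})$ and $B_{r}\times(\mathbb{R}^{n}\setminus B_{R})$. The local piece is bounded from below by the target log-integrand via the sign property $(u(x)-u(y))(\eta(x)-\eta(y))\le0$ combined with a pointwise Young-function inequality of the schematic form
\[
g\!\left(\tfrac{|a-b|}{|x-y|^{s}}\right)\bigl|(a+\alpha)^{1-q}-(b+\alpha)^{1-q}\bigr|\;\gtrsim\;\frac{1}{|x-y|^{s}}\,\Bigl|\log\tfrac{a+\alpha}{b+\alpha}\Bigr|,
\]
derived by splitting into the regimes $|a-b|\le\tfrac12(b+\alpha)$ and $|a-b|>\tfrac12(b+\alpha)$ and invoking \eqref{condition on g} in each. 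The long-range contribution from $\mathbb{R}^{n}\setminus B_{R}$ is handled by writing $|u(x)-u(y)|\le u(x)+u_{-}(y)+u_{+}(y)$ and using the scaling $g(at)\le\max\{a^{p_{g}},a^{q_{g}}\}g(t)$; combined with $\eta(x)\lesssim(u(x)+\alpha)^{1-q}$ this yields exactly the tail contribution $\tfrac{r^{s}}{g(\alpha/r^{s})}\,\textnormal{Tail}(u_{-},x_{0},R)$. The intermediate piece over $B_{R}\setminus B_{r}$ is absorbed using the cutoff $\varphi$ and contributes to the $r^{n}$ prefactor.

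The right-hand side is controlled by observing that, since $u\ge0$ on $B_{R}$ and $1-q<0$, the product $g(u)(u_{M}+\alpha)^{1-q}$ is pointwise bounded by a constant depending only on $p,q$ (this is precisely where the upper index from \eqref{condition on G} pays off), so the forcing integral is dominated by $K_{0}\|c\|_{L^{1}(B_{R})}$. Finally I pass to the limit $M\to\infty$ by monotone/dominated convergence, justified by $u\in L^{g}_{s}(\mathbb{R}^{n})$, and collect the pieces to obtain \eqref{loglemma}.

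The main obstacle is the pointwise Young-function inequality that converts the $g$-weighted $u$-difference into a log difference. In the fractional $p$-Laplacian case this is the Di Castro-Kuusi-Palatucci lemma, whose proof exploits the clean scaling of $t^{p-1}$; in the Orlicz setting one must push the two-regime analysis through using only the structural bounds \eqref{condition on g} and the convexity of $g$, verifying that the implicit constants remain uniform both near the degeneracy $a\approx b$ and as $a/(b+\alpha)\to\infty$. Once this inequality is established, the remaining estimates are routine and assemble into \eqref{loglemma}.
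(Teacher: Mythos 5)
The choice of test function is where your argument breaks down, and the defect is not cosmetic. You take $\eta=\varphi^{q}(u_{M}+\alpha)^{1-q}$, the direct transplant of the fractional $p$-Laplacian choice $(u+d)^{1-p}\varphi^{p}$, whereas the paper tests with the Orlicz-adapted
\[
\eta=\frac{(u+\alpha)\,\phi^{q}}{G\bigl((u+\alpha)/r^{s}\bigr)},
\]
following \cite{byun2022}. The difference is fatal for the $c$-term. Your claim that $g(u)(u_{M}+\alpha)^{1-q}=g(u)(u_{M}+\alpha)^{-q_g}$ is bounded by a constant depending only on $p,q$ fails whenever $p_g<q_g$: for a model $g$ behaving like $t^{p_g}$ near $0$ and $t^{q_g}$ near $\infty$, the function $u\mapsto g(u)(u+\alpha)^{-q_g}$ has an interior maximum at $u_{*}=\tfrac{p_g\alpha}{q_g-p_g}$ with value comparable to $\alpha^{p_g-q_g}$, which diverges as $\alpha\to0$. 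So the $K_{0}$ you produce depends on $\alpha$, contradicting the statement of \cref{Loglemma} and, more importantly, breaking its application: in Step 1 of the proof of \cref{mainth1} the bound \eqref{equ3.6} is used by letting $\alpha\to0$ with $C,K_{0}$ held fixed, so $\alpha$-uniformity is essential. The paper's test function is engineered precisely so that
\[
g(u)\,\eta \le \frac{g(u+\alpha)(u+\alpha)}{G(u+\alpha)}\cdot r^{sp}\phi^{q}\le q\,r^{sp}\phi^{q},
\]
using \eqref{condition on G} and $G\bigl((u+\alpha)/r^{s}\bigr)\ge r^{-sp}G(u+\alpha)$; the $G$ in the denominator cancels $g(u)$ exactly, which no fixed power $(u+\alpha)^{1-q}$ can replicate unless $p_g=q_g$.

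A secondary flaw is in your schematic pointwise inequality for the local piece. Taking $a=b+\epsilon$ and $g(t)=t^{p-1}$, your left-hand side is of order $\epsilon^{p}|x-y|^{-s(p-1)}(b+\alpha)^{-q}$ while the right-hand side is of order $\epsilon\,|x-y|^{-s}(b+\alpha)^{-1}$, so the ratio vanishes as $\epsilon\to0$ and no universal constant can close it. What the Di Castro--Kuusi--Palatucci manipulation (and its Orlicz analogue in \cite[Proposition~3.4]{byun2022}) actually produces, after integration against the kernel, is a bound involving $G$ of the log-difference rather than the log-difference itself; the $L^{1}$ form in \eqref{loglemma} then follows from $|t|\lesssim 1+G(|t|)$ with the additive constant absorbed into $Cr^{n}$. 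Your overall plan — test with a function decreasing in $u$, split local/intermediate/far, control the far range via the nonlocal tail, bound the $c$-term pointwise and pass to the limit in the truncation — is the right shape, but both the test function and the key pointwise estimate must be replaced by their Orlicz-adapted versions for the argument to go through.
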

\begin{proof}
Fix a test function $\phi \in C^{\infty}_c(B_{\frac{3r}{2}})$ such that
\[0\leq \phi \leq 1,\,\,\,\, \phi\equiv1 \,\,\, \text{in}\,\,\, B_r\,\,\,\, \text{and}\,\,\,\, \left|D\phi\right| \leq \frac{4}{r}.\]
Since $\eta:=\frac{(u+\alpha) \phi^{q}}{G(\frac{u+\alpha}{r^s})} \in W^{s, G}_0(\Omega)$ for $q\geq 1,$ $\eta \geq 0$ and $u$ is a supersolution of $\eqref{nonlocal equation},$ taking $\eta$ as a test function we get
\begin{align}\label{eq2.5}
    &\int_{B_{\frac{3r}{2}}}c(x)g(u(x)) \frac{(u(x)+\alpha) \phi^q(x)}{G(\frac{(u(x)+\alpha)}{r^s})}dx\nonumber \\
    &\leq\int_{B_{2r}}\int_{B_{2r}} g\left(\frac{|u(x)-u(y)|}{|x-y|^s}\right) \frac{u(x)-u(y)}{|u(x)-u(y)|} \frac{(\eta(x)-\eta(y))K(x,y)}{|x-y|^s}dxdy \nonumber\\
    &+2\int_{\mathbb{R}^n \setminus B_{2r}}\int_{B_{2r}}g\left(\frac{|u(x)-u(y)|}{|x-y|^s}\right) \frac{u(x)-u(y)}{|u(x)-u(y)|} \frac{\eta(x)K(x,y)}{|x-y|^s}dxdy :=I_1+I_2
\end{align}
Following the proof of \cite[Proposition~3.4]{byun2022}, we have
\begin{align*}
    I_1 \leq -\lambda \tilde{C} \int_{B_r}\int_{B_r} \left|\frac{log\left(u(x)+\alpha\right)}{log\left(u(y)+\alpha\right)}\right| \frac{dxdy}{|x-y|^n} + Cr^n
\end{align*}
and 
\begin{align*}
    I_2 \leq Cr^n \Big[1+\frac{r^s}{g(\frac{\alpha}{r^s})} \textnormal{Tail}(u_{-}, x_0, R)\Big].
\end{align*}
Hence from the inequality \eqref{eq2.5}, we obtain
\begin{align}\label{eq2.6}
\int_{B_r}\int_{B_r} \left|\frac{log\left(u(x)+\alpha\right)}{log\left(u(y)+\alpha\right)}\right| \frac{dxdy}{|x-y|^n}\leq &\frac{1}{\lambda \tilde{C}}\int_{B_{\frac{3r}{2}}}|c(x)|g(u(x)) \frac{(u(x)+\alpha) \phi^q(x)}{G(\frac{(u(x)+\alpha)}{r^s})}dx\nonumber \\
&+\frac{Cr^n}{\lambda\tilde{C}}\Big[1+\frac{r^s}{g(\frac{\alpha}{r^s})} \textnormal{Tail}(u_{-}, x_0, R)\Big].
\end{align}
Now using the property of $G,$ we have
\begin{align}\label{con g}
     \min\left\{\left(\frac{1}{r^s}\right)^p, \left(\frac{1}{r^s}\right)^q\right\}G(u(x)+\alpha) \leq G\left(\frac{u(x)+\alpha}{r^s}\right) \leq \max\left\{\left(\frac{1}{r^s}\right)^p, \left(\frac{1}{r^s}\right)^q\right\} G(u(x)+\alpha).
\end{align}
Applying \eqref{con g} and the increasing property of $g$ into \eqref{eq2.6}, for $0<r< \frac{R}{2}$ we get
\begin{align}\label{eq2.8}
   g(u(x)) \frac{(u(x)+\alpha) \phi^q(x)}{G(\frac{(u(x)+\alpha)}{r^s})} \leq \frac{g(u(x)+\alpha) (u(x)+\alpha)}{G(u(x)+\alpha)}\cdot \frac{1}{\min\left\{\left(\frac{1}{r^s}\right)^p, \left(\frac{1}{r^s}\right)^q\right\} } \le C_1 q.
\end{align}
Plugging \eqref{eq2.8} in \eqref{eq2.6} we get the \eqref{loglemma} with constants $C, K_0$ depending on $n,p,q, \lambda, \Lambda, diam(\Omega).$
\end{proof}

\begin{lemma}\label{VWlemma}
Let $c(x)$ be a continuous function in $\overline{\Omega}.$ If $u \in W^{s,G}_{loc}(\Omega)\cap L^g_s(\mathbb{R}^n) \cap C(\overline{\Omega}) $ is a weak supersolution of \eqref{nonlocal equation} and $u \geq 0$ in $\Omega^c,$ then $u$ is a viscosity supersolution of \eqref{nonlocal equation}.
\end{lemma}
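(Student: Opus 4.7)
\textbf{Proof plan for Lemma \ref{VWlemma}.} The strategy is the standard ``weak implies viscosity'' argument: contradict the viscosity condition at a boundary-touching point, propagate the strict inequality to a neighborhood by continuity, smoothen it by a bump perturbation, and then close the loop through the strict monotonicity of $\mathcal{L}$ applied to a well-chosen test function.

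First, I would fix $x_0 \in \Omega$ and a test function $\psi \in C^2(B_r(x_0))\cap L^g_s(\mathbb{R}^n)$ as in \cref{defn2.2}(3), and argue by contradiction assuming
\[
\mathcal{L}\psi_r(x_0) \;<\; c(x_0)\,g(|u(x_0)|)\,\tfrac{u(x_0)}{|u(x_0)|}.
\]
By \cref{Lemma 2.3}, $\mathcal{L}\psi_r$ is continuous in $B_r(x_0)$, and since $c,u\in C(\overline{\Omega})$, there exist $\rho\in(0,r)$ and $\delta>0$ with
\[
\mathcal{L}\psi_r(x) \;\leq\; c(x)\,g(|u(x)|)\,\tfrac{u(x)}{|u(x)|} - \delta\qquad \text{for every } x\in B_\rho(x_0).
\]
Next, I would invoke \cref{regular peturbation} with $\varepsilon=\delta/2$ and $\tilde\rho=\rho$ to get a cutoff $\eta\in C^2_c(B_{\rho/2}(x_0))$ with $\eta(x_0)=1$, $0\le\eta\le1$, and $\tilde\theta>0$ such that $\psi_\theta:=\psi+\theta\eta$ satisfies $|\mathcal{L}\psi_r-\mathcal{L}(\psi_\theta)_r|<\delta/2$ on $B_\rho(x_0)$ for all $0<\theta<\tilde\theta$. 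Fixing such a $\theta$ gives
\[
\mathcal{L}(\psi_\theta)_r(x) \;\leq\; c(x)\,g(|u(x)|)\,\tfrac{u(x)}{|u(x)|} - \tfrac{\delta}{2}\qquad \text{for every } x\in B_\rho(x_0).
\]

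Then I would introduce the candidate test function
\[
\zeta(x)\;:=\;\bigl((\psi_\theta)_r(x)-u(x)\bigr)_+ .
\]
Because $\eta$ vanishes outside $B_{\rho/2}(x_0)$ and $\psi\le u$ on $B_r(x_0)$, one has $(\psi_\theta)_r\le u$ off $B_{\rho/2}(x_0)$, while $(\psi_\theta)_r(x_0)=u(x_0)+\theta$; thus $\zeta$ is non-negative, continuous, compactly supported in $B_{\rho/2}(x_0)$, and $\zeta(x_0)=\theta>0$. Regularity of $\psi$ together with $u\in W^{s,G}_{\mathrm{loc}}(\Omega)\cap L^g_s(\mathbb{R}^n)$ guarantee $\zeta\in W^{s,G}_0(B_{\rho/2}(x_0))$. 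Testing $\zeta$ in the weak supersolution inequality for $u$ gives
\[
\bigl\langle\mathcal{L}u,\zeta\bigr\rangle \;\geq\; \int_{B_{\rho/2}(x_0)} c(x)\,g(|u|)\,\tfrac{u}{|u|}\,\zeta\,dx,
\]
while applying (the argument of) \cref{lemma2.4} to the pointwise bound on $\mathcal{L}(\psi_\theta)_r$ yields
\[
\bigl\langle\mathcal{L}(\psi_\theta)_r,\zeta\bigr\rangle \;\leq\; \int_{B_{\rho/2}(x_0)}\!\Bigl(c(x)\,g(|u|)\,\tfrac{u}{|u|}-\tfrac{\delta}{2}\Bigr)\,\zeta\,dx ,
\]
where the bracket $\langle\mathcal{L}\cdot,\cdot\rangle$ denotes the nonlocal bilinear form on the left-hand side of \eqref{weak solution}. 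Subtracting the two and using the monotonicity identity
$\langle\mathcal{L}v_1-\mathcal{L}v_2,(v_1-v_2)_+\rangle\ge 0$
(the same monotonicity underlying \cref{prop2.1}, coming from the convexity of $G$ and the sign of $g$) with $v_1=(\psi_\theta)_r$ and $v_2=u$, so that $(v_1-v_2)_+=\zeta$, I obtain
\[
0 \;\leq\; \bigl\langle\mathcal{L}(\psi_\theta)_r-\mathcal{L}u,\zeta\bigr\rangle \;\leq\; -\tfrac{\delta}{2}\!\int_{B_{\rho/2}(x_0)}\!\zeta\,dx \;\leq\;0,
\]
forcing $\zeta\equiv 0$, contradicting $\zeta(x_0)=\theta>0$.

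The step I expect to be most delicate is the pointwise-to-weak passage for the spliced function $(\psi_\theta)_r$: \cref{lemma2.4} is stated for $C^1_0$ functions, but $(\psi_\theta)_r$ is only $C^2$ on the inner piece and equals $u$ outside $B_r(x_0)$. However, the argument of \cref{lemma2.4} rests solely on the symmetry of $K$ under $x\leftrightarrow y$ and finiteness of the associated double integrals; these are ensured by $u\in W^{s,G}_{\mathrm{loc}}(\Omega)\cap L^g_s(\mathbb{R}^n)\cap C(\overline{\Omega})$ and by the smoothness of $\psi$ and $\eta$ in $B_r(x_0)$, so the symmetrization procedure goes through for test functions supported in $B_{\rho/2}(x_0)$. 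The only other point requiring care is the placement of the strict inequality under the perturbation: here the quantitative estimate of \cref{regular peturbation} is precisely what keeps the strict inequality $-\delta/2$ intact after passing from $\psi$ to $\psi_\theta$.
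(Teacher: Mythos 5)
Your proposal is correct and follows essentially the same strategy as the paper: contradiction at $x_0$, continuity of $\mathcal{L}\psi$ via \cref{Lemma 2.3}, the quantitative bump perturbation of \cref{regular peturbation}, and the pointwise-to-weak passage of \cref{lemma2.4}. The only cosmetic difference is that you unroll the comparison step directly through the monotonicity inequality $\langle\mathcal{L}v_1-\mathcal{L}v_2,(v_1-v_2)_+\rangle\ge 0$ applied to $\zeta=((\psi_\theta)_r-u)_+$, whereas the paper invokes \cref{prop2.1} as a black box and then reads off the contradiction $u(x_0)<\psi_\theta(x_0)\le u(x_0)$; these are two presentations of the same argument.
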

\begin{proof}
By the assumptions on $u,$ one can easily observe that the conditions  (1), (2), (4) in the \cref{defn2.2} are satisfied. To check 3, we proceed by contradiction. On the contrary, suppose there exists $x_0 \in \Omega$ and $\psi \in C^2(B_r(x_0)) \cap L^{\infty}(\mathbb{R}^n)$ such that
$(a)\,\, u(x_0)=\psi(x_0),$ $(b)$ either $(i)$ or $(ii)$ in 3 holds, and $(c)$ \[\mathcal{L} \psi(x_0) < c(x_0)g(|u(x_0)|)\frac{u(x_0)}{|u(x_0)|}.\]
Then by the continuity of $\mathcal{L}\psi$ (cf. \cref{Lemma 2.3}), there exists a $\kappa\in (0, r)$ such that for all $x \in B_{\kappa}(x_0),$ and for all $n > N_0,$ 
we have
\[\mathcal{L} \psi(x) + \frac{1}{n} \leq c(x)g(|u(x)|)\frac{u(x)}{|u(x)|}.\] Choosing $\epsilon=\frac{1}{2n},$ from \cref{regular peturbation} we conclude that there exist $\tilde{\theta}>0, \rho\in (0, \frac{\kappa}{2})$ and $\eta \in C^2_c(B_{\frac{\rho}{2}}(x_0))$ with $0\leq \eta \leq 1$ and $\eta(x_0)=1 $ such that $\psi_{\theta}(x)=\psi(x)+\theta \eta(x)$  satisfies 
\begin{equation*}
    \sup_{B_{\rho}(x_0)}\left|\mathcal{L}\psi(x)-\mathcal{L}\psi_{\theta}(x)\right| < \epsilon < \inf_{B_{\rho/2}(x_0)}\left(c(x)g(|u(x)|)\frac{u(x)}{|u(x)|}-\mathcal{L}\psi(x)\right)\,\,\,\, \text{for all}\,\,\,\, 0\leq \theta < \tilde{\theta}.
    \end{equation*}
Consequently, we have 
\[\sup_{B_{\rho}(x_0)}\left|\mathcal{L}\psi_{\theta}(x)\right|<c(x)g(|u(x)|)\frac{u(x)}{|u(x)|}.\]
Moreover, we have $\psi_{\theta} \leq u$ in $B^c_{\rho}(x_0)$ and using \cref{lemma2.4} with the definition of supersolution we obtain
\begin{equation*}
    \begin{aligned}
    &\int_{\mathbb{R}^n} \int_{\mathbb{R}^n} g\left(\frac{|\psi_{\theta}(x)-\psi_{\theta}(y)|}{|x-y|^s}\right) \frac{\psi_{\theta}(x)-\psi_{\theta}(y)}{|\psi_{\theta}(x)-\psi_{\theta}(y)|} \frac{(\eta(x)-\eta(y))K(x,y)}{|x-y|^s}dxdy\\
    &\leq \int_{\mathbb{R}^n} \int_{\mathbb{R}^n} g\left(\frac{|u(x)-u(y)|}{|x-y|^s}\right) \frac{u(x)-u(y)}{|u(x)-u(y)|} \frac{(\eta(x)-\eta(y))K(x,y)}{|x-y|^s}dxdy
    \end{aligned}
\end{equation*}
for all $\eta \in W_0^{s,G}(B_{\rho}(x_0)), \,\, \eta\geq 0.$
Hence by the comparison principle (cf. \cref{prop2.1}), we get $\psi_{\theta} \leq u$ in $B_{\rho}(x_0).$ But this leads to a contradiction since $u(x_0)< \psi_{\theta}(x_0)=\psi(x_0)+\theta \leq u(x_0).$  This completes the proof. 
\end{proof}
\section{Strong minimum principle} \label{sec3}
The Aim of this section is to prove \cref{mainth1}. We start with some preparatory lemmas.
\begin{lemma}\label{lemma3.1}
Let $c\in L^1_{loc}(\Omega),$ $c(x)\leq 0$ and $u$ be a weak supersolution of \eqref{nonlocal equation}. Then we have the following conclusions:
\begin{enumerate}
    \item If $u \geq 0\,\,\, a.e.$ in $\mathbb{R}^n \setminus \Omega$ and $u=0\,\,\, a.e.$ in $\Omega,$ then $u=0\,\,\, a.e.$ in all of $\mathbb{R}^n.$
    \item If $u \geq 0 \,\,\, a.e.$ in $\mathbb{R}^n \setminus \Omega$ for a bounded domain $\Omega,$ then $u \geq 0 \,\,\, a.e.$ in $\Omega.$
\end{enumerate}
\end{lemma}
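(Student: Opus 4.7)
The plan is to handle the two parts separately, in each case testing the weak supersolution inequality \eqref{weak solution} against a suitable non-negative function.

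For part~(1), fix any bounded open $\Omega'\subset\Omega$ and any non-negative $\eta\in C^\infty_c(\Omega')$. Since $u=0$ a.e.\ in $\Omega$, the right-hand side of \eqref{weak solution} vanishes (with the convention $g(0)\tfrac{0}{|0|}:=0$). On the left-hand side I split $\RR^n\times\RR^n$ into the four pieces $\Omega\times\Omega$, $\Omega^c\times\Omega^c$, $\Omega\times\Omega^c$, $\Omega^c\times\Omega$. The first is trivially zero because $u(x)=u(y)=0$, the second because $\eta(x)=\eta(y)=0$. On the two cross pieces, the pair $u(x)=0$, $u(y)\ge 0$ (resp.\ $u(y)=0$, $u(x)\ge 0$) together with $\eta\ge 0$ and $\eta=0$ on $\Omega^c$ forces $\mathrm{sgn}(u(x)-u(y))$ and $\eta(x)-\eta(y)$ to have opposite signs, so the integrand is non-positive. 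Hence the entire left-hand side is $\le 0$, and the supersolution inequality then forces it to be $=0$. Each cross piece must then vanish separately, and we obtain
\[
\int_{\Omega^c}\!\int_{\Omega'} g\!\left(\tfrac{u(x)}{|x-y|^s}\right) \eta(y)\, \tfrac{K(x,y)}{|x-y|^s}\, dy\, dx \;=\; 0
\]
for every non-negative $\eta\in C^\infty_c(\Omega')$. Choosing $\eta$ strictly positive on a ball in $\Omega'$ and using $K>0$ together with $g(t)=0\iff t=0$, we conclude $u=0$ a.e.\ in $\Omega^c$, which completes~(1).

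For part~(2) the plan is to test \eqref{weak solution} against $\eta=u_-$. Since $\Omega$ is bounded and $u\ge 0$ a.e.\ in $\Omega^c$, the function $u_-$ vanishes a.e.\ on $\Omega^c$; its admissibility in $W^{s,G}_0(\Omega)$ is secured by a standard cutoff/truncation argument using the $\Delta_2$ condition. With $\eta=u_-$ the right-hand side becomes
\[
\int_\Omega c(x) g(|u|)\tfrac{u}{|u|}u_-\,dx \;=\; \int_\Omega (-c(x))\, g(u_-)\, u_- \, dx \;\ge\; 0,
\]
because $g(|u|)\tfrac{u}{|u|}u_-$ equals $-g(u_-)u_-$ on $\{u<0\}$ and vanishes on $\{u\ge 0\}$, and $c\le 0$. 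For the left-hand side the crucial ingredient is the pointwise inequality
\[
g(|a-b|)\,\mathrm{sgn}(a-b)\,(a_--b_-) \;\le\; -\,g(|a_--b_-|)\,|a_--b_-|,\qquad a,b\in\RR,
\]
which I verify by a short case analysis over the four sign combinations of $(a,b)$ using only monotonicity of $g$. Applying this pointwise (with scaling by $|x-y|^s$), using $K(x,y)\ge \lambda/|x-y|^n$ and the bound $tg(t)\ge p\,G(t)$ from \eqref{condition on G}, the left-hand side is bounded above by
\[
-\,p\,\lambda \int_{\RR^n}\!\int_{\RR^n} G\!\left(\tfrac{|u_-(x)-u_-(y)|}{|x-y|^s}\right)\frac{dx\,dy}{|x-y|^n}.
\]
The supersolution inequality then sandwiches this non-positive quantity above a non-negative one, forcing both to vanish. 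In particular the displayed modular of $u_-$ is zero, so $u_-(x)=u_-(y)$ for a.e.\ $(x,y)\in\RR^n\times\RR^n$; combined with $u_-\equiv 0$ on $\Omega^c$ (of positive measure since $\Omega$ is bounded) this yields $u_-\equiv 0$ a.e.\ in $\RR^n$, i.e., $u\ge 0$ a.e.\ in $\Omega$.

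The step I expect to be most delicate is the pointwise monotonicity inequality used in part~(2): unlike the fractional $p$-Laplace situation where the homogeneous structure of $|a-b|^{p-2}(a-b)$ permits a one-line argument, in the Orlicz setting the four sign regimes of $(a,b)$ must be exhausted individually using only that $g$ is increasing. A secondary technical point is the admissibility of $u_-$ as a test function starting from $u\in W^{s,G}_{loc}(\Omega)\cap L^g_s(\RR^n)$ alone, which requires multiplying by a cutoff $\phi_k\in C^\infty_c(\Omega)$ with $\phi_k\uparrow\mathbf{1}_\Omega$ and passing to the limit.
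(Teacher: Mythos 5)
Both parts of your proposal are correct, and part~(1) is essentially identical to the paper's argument: test the supersolution inequality against non-negative $\eta\in C^\infty_c(\Omega)$, note that the $\Omega\times\Omega$ piece vanishes because $u=0$ there and the $\Omega^c\times\Omega^c$ piece vanishes because $\eta=0$ there, observe the two cross pieces are non-positive under the sign hypotheses, and force them to vanish. In part~(2) you also use the test function $u_-$, exactly as the paper does, but you replace the paper's soft observation that $(u(x)-u(y))(u_-(x)-u_-(y))\le 0$ with the sharper pointwise inequality $g(|a-b|)\,\mathrm{sgn}(a-b)(a_--b_-)\le -\,g(|a_--b_-|)\,|a_--b_-|$, which I have checked holds case by case under the monotonicity of $g$. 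Combined with $t\,g(t)\ge p\,G(t)$ from \eqref{condition on G} and the kernel lower bound, this dominates the bilinear form by $-p\lambda$ times the Gagliardo modular of $u_-$, so the supersolution inequality kills that modular outright; constancy of $u_-$ and then $u_-\equiv 0$ (since $u_-$ vanishes on the complement of the bounded set $\Omega$) follow without further input. The paper, by contrast, only concludes the bilinear form is non-positive, infers a.e.\ constancy of $u_-$ from pointwise vanishing of the integrand, and invokes a Pólya--Szegő-type result from \cite{Salortpoyla-szeo} to finish. Your route is therefore somewhat more self-contained; the price is the four-case verification of the pointwise inequality, which you correctly flag as the delicate step. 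Both arguments are sound and structurally parallel.
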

\begin{proof}
Take any non negative test function $\eta \in C^{\infty}_c(\Omega).$ Since $u=0\,\,\, a.e.$ in $\Omega$ and $\eta=0$ in $\mathbb{R}^n \setminus \Omega,$ from the definition of weak solution, we have
\begin{align*}
&\int_{\mathbb{R}^n} \int_{\mathbb{R}^n} g\left(\frac{|u(x)-u(y)|}{|x-y|^s}\right) \frac{u(x)-u(y)}{|u(x)-u(y)|} \frac{(\eta(x)-\eta(y))K(x,y)}{|x-y|^s}dxdy\\
&= -2 \int_{\Omega} \int_{\mathbb{R}^n \setminus \Omega} g\left(\frac{\left|u(x)\right|}{\left|x-y\right|^s}\right)\frac{u(x)}{\left|u(x)\right|}\frac{\eta(y)}{\left|x-y\right|^s}K(x,y)dxdy \geq 0.
\end{align*}
Using the fact that $u\geq 0\,\,\, a.e.$ in $\mathbb{R}^n \setminus \Omega$ we get $u=0\,\,\, a.e.$ in $\mathbb{R}^n \setminus \Omega.$ Thus combining $u=0\,\,\, a.e.$ in $\Omega,$ we have $u=0\,\,\, a.e$ in $\mathbb{R}^n$. This proves $(1).$
Note that, since $u\in W^{s, G}_{loc}(\Omega)$ and $u\geq 0\,\,\, a.e$ in $\mathbb{R}^n\setminus\Omega,$ we get $u_{-} \in W^{s, G}_0(\Omega).$ Then using $u_{-}$ as a test function by definition of supersolution, we have
\begin{align*}
&\int_{\mathbb{R}^n} \int_{\mathbb{R}^n} g\left(\frac{|u(x)-u(y)|}{|x-y|^s}\right) \frac{u(x)-u(y)}{|u(x)-u(y)|} \frac{(u_{-}(x)-u_{-}(y))K(x,y)}{|x-y|^s}dxdy \\
&\geq \int_{\Omega} c(x)g(|u|)\frac{u}{|u|} u_{-}(x)dx \geq 0. 
\end{align*}
In the last inequality, we used $c(x) \leq 0$ and $\frac{u(x) u_{-}(x)}{|u(x)|}$ is always non-positive for any $x \in \Omega$. On the other hand, noting that the term $(u(x)-u(y))(u_{-}(x)-u_{-}(y)) \leq 0$ for any $x,y \in \Omega$, we get
\begin{align*}
   \int_{\mathbb{R}^n} \int_{\mathbb{R}^n} g\left(\frac{|u(x)-u(y)|}{|x-y|^s}\right) \frac{u(x)-u(y)}{|u(x)-u(y)|} \frac{(u_{-}(x)-u_{-}(y))K(x,y)}{|x-y|^s}dxdy\leq 0. 
\end{align*}
Since $u_{-}$ is zero outside of $\Omega$ and the above inequalities show that $u_{-}$ is constant inside $\Omega,$ using \cite[Proposition~2.9]{Salortpoyla-szeo} we conclude that $u_{-}$ is identically zero in $\mathbb{R}^n.$ This proves the second part of the lemma.
\end{proof}
\begin{lemma}\label{lemma3.2}
Let $c\in C(\overline{B_R(x_0)})$ be non-positive and $u \in W^{s, G}_{loc}(B_R(x_0))\cap L^g_s(\mathbb{R}^n) \cap C(\overline{B_R(x_0)})$ be a weak supersolution of \eqref{nonlocal equation} in $B_R(x_0).$ If $u \geq 0$ in $B^c_R(x_0)$, then either $u>0$ in $B_R(x_0)$ or $u=0\,\, a.e.$ in $\mathbb{R}^n.$
\end{lemma}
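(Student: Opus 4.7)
The plan combines the comparison principle already established with a contradiction test in the viscosity sense at an alleged interior zero of $u$. As a first step, I would apply \cref{lemma3.1}\,(2) with $\Omega = B_R(x_0)$ to obtain $u \ge 0$ a.e.\ in $B_R(x_0)$; the continuity of $u$ on $\overline{B_R(x_0)}$ then upgrades this to $u \ge 0$ pointwise on the closed ball. I then argue by contradiction: assume there exists $x_1 \in B_R(x_0)$ with $u(x_1) = 0$ and aim to conclude that $u = 0$ a.e.\ in $\mathbb{R}^n$.

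By \cref{VWlemma}, $u$ is a viscosity supersolution of \eqref{nonlocal equation}. I would then test this at $x_1$ with the trivial function $\psi \equiv 0$, which clearly belongs to $C^2(B_r(x_1)) \cap L^g_s(\mathbb{R}^n)$, satisfies $\psi(x_1) = u(x_1) = 0$, and stays below $u$ on every ball $B_r(x_1) \subset B_R(x_0)$. Although $D\psi(x_1) = 0$, condition~(i) of \cref{defn2.2}\,(3) is still available: the standing hypothesis $p_g > 1$ together with $s \in (0,1)$ forces $p = p_g + 1 > 2 > \tfrac{2}{2-s}$. The viscosity inequality then yields
\[
\mathcal{L}\psi_r(x_1) \;\ge\; c(x_1)\, g(|u(x_1)|)\, \frac{u(x_1)}{|u(x_1)|} \;=\; 0,
\]
the right-hand side being $0$ under the natural interpretation provided by the odd extension of $g$ together with $g(0) = 0$.

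On the other hand, a direct evaluation shows $\mathcal{L}\psi_r(x_1)$ must be non-positive: the integrand vanishes on $B_r(x_1)$ since both $\psi_r(x_1)$ and $\psi_r(y)$ are $0$ there, while on $\mathbb{R}^n \setminus B_r(x_1)$ one has $\psi_r(y) = u(y) \ge 0$, yielding
\[
\mathcal{L}\psi_r(x_1) \;=\; -\int_{\mathbb{R}^n \setminus B_r(x_1)} g\!\left(\frac{u(y)}{|x_1 - y|^s}\right) \frac{K(x_1,y)}{|x_1 - y|^s}\, dy \;\le\; 0.
\]
Comparing the two inequalities forces this integral to vanish, and since $g(t) > 0$ for $t > 0$ and $K$ is bounded below by the growth hypothesis, I can conclude $u(y) = 0$ for a.e.\ $y \in \mathbb{R}^n \setminus B_r(x_1)$. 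Letting $r$ run through a countable sequence tending to $0$ and taking the union of the resulting null sets delivers $u = 0$ a.e.\ in $\mathbb{R}^n \setminus \{x_1\}$, hence a.e.\ in $\mathbb{R}^n$, which is exactly the second alternative in the statement.

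The main obstacle is that at an interior minimum of $u \ge 0$, every $C^1$ touching function has vanishing gradient, so the clause of \cref{defn2.2}\,(3)(i) accessed via $D\psi(x_0) \ne 0$ is unavailable; the whole strategy rests on the fortunate fact that the paper's running assumption $p_g > 1$ automatically puts us in the regime $p > \tfrac{2}{2-s}$, so that the flat test $\psi \equiv 0$ is admissible. A secondary subtlety is the correct interpretation of $g(|u|)u/|u|$ at a zero of $u$, which is handled by the odd extension of $g$ already fixed in the preliminaries and makes the right-hand side continuous and equal to $0$ at $x_1$.
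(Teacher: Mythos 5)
Your proof is correct and rests on the same scaffolding as the paper's (first \cref{lemma3.1}\,(2), then \cref{VWlemma}, then a viscosity test at an interior zero), but the choice of test function is genuinely different and makes the argument shorter. The paper tests with the family $\psi^{\epsilon}(x)=-\epsilon\lvert x-\tilde{x}\rvert^{\beta}$ for $\beta>\max\{2,sp/(p-1)\}$, computes a quantitative bound (the display after \eqref{equ3.5}) showing the contribution of $B_r(\tilde x)$ tends to $0$ as $\epsilon\to 0$, and only in the limit recovers $u\le 0$ in $B_r^c(\tilde x)$. You instead observe that the standing hypothesis $p_g>1$ gives $p=p_g+1>2$ while $s\in(0,1)$ gives $\tfrac{2}{2-s}<2$, so $p>\tfrac{2}{2-s}$ holds unconditionally and case~(i) of \cref{defn2.2}\,(3) is always available regardless of $D\psi$; this makes $\psi\equiv 0$ an admissible touching function, annihilates the contribution of $B_r(x_1)$ identically, and removes the $\epsilon\to 0$ passage entirely. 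The remaining step — that the signed tail integral must vanish, forcing $u=0$ a.e.\ off $B_r(x_1)$, and then shrinking $r$ along a sequence — is the same in both proofs. Your route is the cleaner one under the present paper's hypotheses; the paper's more elaborate choice appears to be carried over from \cite{delpezzoJDE2017}, where $p$ need not exceed $\tfrac{2}{2-s}$ and a flat test function would fail the admissibility check, but that regime is vacuous here. One small caveat worth stating explicitly if you write this up: the upgrade from $u\ge 0$ a.e.\ in $B_R(x_0)$ to $u\ge 0$ pointwise there, which you need so that $\psi\equiv 0$ lies below $u$ \emph{everywhere} on $B_r(x_1)$, uses the $C(\overline{B_R(x_0)})$ hypothesis on $u$; you mention it, and it is indeed essential.
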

\begin{proof}
Our aim is to show that if $u(\tilde{x})=0$ for some $\tilde{x} \in B_R(x_0),$ then $u=0\,\, a.e.$ in $\mathbb{R}^n.$ First, using \cref{lemma3.1} (2), we get $u \geq 0\,\, a.e$ in $B_R(x_0).$ Moreover, by the equivalence of viscosity solution and weak solution (\cref{VWlemma}), we say that $u$ is a viscosity solution of \eqref{nonlocal equation} in $B_R(x_0).$ Now for any $\epsilon >0,$ we choose a test function 
\[\psi^{\epsilon}(x)=-\epsilon \left|x-\tilde{x}\right|^{\beta},\]
where $\beta > \max\left\{2, \frac{sp}{p-1}\right\}.$ Therefore by the definition of viscosity supersolution,
\begin{align}\label{equation3.1}
\mathcal{L}\psi^{\epsilon}_r(\tilde{x})\geq c(\tilde{x})g(|u(\tilde{x})|)\frac{u(\tilde{x})}{|u(\tilde{x})|}=0,
\end{align}
where $\psi_r$ is defined as
\begin{align*}
    \psi^{\epsilon}_r(x)=\begin{cases}
\psi^{\epsilon}(x)\,\,\,\, \textnormal{in}\,\,\, B_r(\tilde{x})\\
u(x)\,\,\,\, \textnormal{otherwise}
\end{cases}
\end{align*}
for some $r \in (0, R-|x_0-\tilde{x}|).$ From \eqref{equation3.1}, we obtain
\begin{align}\label{equ3.5}
    \int_{B_r(\tilde{x})} g\left(\frac{\epsilon|y-\tilde{x}|^{\beta}}{|\tilde{x}-y|^s}\right)K(\tilde{x},y)\frac{dy}{|\tilde{x}-y|^s} \geq \int_{B^c_r(\tilde{x})} g\left(\frac{|u(y)|}{|\tilde{x}-y|^s}\right)\frac{u(y)}{|u(y)|} K(\tilde{x},y)\frac{dy}{|\tilde{x}-y|^s}.
\end{align}
Observe that when $p\leq \frac{2}{2-s},$ $\beta > \frac{sp}{p-1}$ and using \eqref{condition on g} with the property $g(\epsilon t) \leq \epsilon g(t) $ for $\epsilon <1,$ we obtain
\begin{align}\label{eq3.6}
   & \int_{B_r(\tilde{x})} g\left(\frac{\epsilon|y-\tilde{x}|^{\beta}}{|\tilde{x}-y|^s}\right)K(\tilde{x},y)\frac{dy}{|\tilde{x}-y|^s}\nonumber \\
    &\leq \epsilon \Lambda \int_{B_r(\tilde{x})}g\left(|y-\tilde{x}|^{\beta-s}\right)\frac{dy}{|\tilde{x}-y|^{n+s}}= \epsilon \Lambda \int_{B_r(\tilde{x})}\left(\int_0^1\frac{d}{d\theta} g\left(\theta |\tilde{x}-y|^{\beta-s}\right)d\theta\right)\frac{dy}{|\tilde{x}-y|^{n+s}}\nonumber\\
    &= \epsilon \Lambda \int_{B_r(\tilde{x})}\int_0^1 g^{\prime}\left(\theta|\tilde{x}-y|^{\beta-s}\right)\cdot |\tilde{x}-y|^{(\beta-s)-(n+s)}d\theta dy \nonumber\\
    & \leq \epsilon \Lambda q_g \int_{B_r(\tilde{x})} \int_0^1 \frac{g\left(\theta |\tilde{x}-y|^{\beta-s}\right)}{\theta |\tilde{x}-y|^{\beta-s}} \cdot |\tilde{x}-y|^{(\beta-s)-(n+s)}d\theta dy \nonumber \\
    &\leq \epsilon \Lambda q_g\int_{B_r(\tilde{x})} \int_0^1 \frac{g(\theta)}{\theta}\cdot\left|\tilde{x}-y\right|^{(\beta-s)(p-1)-(n+s)} d\theta dy \nonumber \\
    &\leq \frac{\epsilon \Lambda q_g}{p_g} \int_{B_r(\tilde{x})} \int_0^1 g^{\prime}(\theta) \cdot \left|\tilde{x}-y\right|^{(\beta-s)(p-1)-(n+s)} d\theta dy \leq \frac{  \epsilon C_0 \Lambda q_g}{p_g} \int_{B_r(\tilde{x})}\left|\tilde{x}-y\right|^{(\beta-s)(p-1)-(n+s)}dy \nonumber \\
    &=\frac{\epsilon C_0 \Lambda q_g r^{(\beta-s)(p-1)-s}}{p_g (\beta-s)(p-1)-s} \to 0
    \,\,\, \text{as} \,\,\, \epsilon \to 0.
\end{align}
 When $p>\frac{2}{2-s}$ a similar calculation yields that the L.H.S of \eqref{equ3.5} goes to zero as $\epsilon \to 0.$ Hence R.H.S of \eqref{equ3.5} gives $u\leq 0$ in $B^c_r(\tilde{x}).$ Since $B^c_r(\tilde{x})=\left(B_R(x_0)\setminus B_r(x_0)\right) \cup B^c_R(x_0),$ by the assumption on $u$ together with \cref{lemma3.1} (2) , we conclude that $u=0\,\, a.e.$ in $B^c_r(\tilde{x}).$ Now proceeding as above the same can be shown for any $r^{\prime}<r,$ that is, $u=0\,\, a.e.$ in $B^c_{r^{\prime}}(\tilde{x}).$ Therefore $u=0\,\, a.e.$ in $\mathbb{R}^n.$
\end{proof}
\begin{remark}
Observe that if $\beta \geq 2,$ then $\psi_r^{\epsilon} \in C^2_{\beta}(B_r(x_0))$ and thus becomes a valid test function. However using the assumption on $u$ and the restriction on $\beta,$ \eqref{eq3.6} shows that $\psi_r^{\epsilon} \in L^g_s(\mathbb{R}^n)$ for any fixed $\epsilon >0.$ 
\end{remark}
Now we are in a position to prove \cref{mainth1}.
\begin{proof}[Proof of \cref{mainth1}]
The proof is divided into two cases. In the first case we consider $c\in L^1_{loc}(\Omega)$ and  $c, u \in C(\overline{\Omega})$ is treated in the second case.\\
{\bf Case 1: $ c \in L^1_{loc}(\Omega).$} We provide a proof for this case in three steps.\\
{\it Step 1:} In the first step we aim to prove if $\Omega$ is bounded and connected, and $u\geq 0\,\, a.e.$ in $\Omega^c,$ then either $u>0\,\, a.e$ in $\Omega$ or $u=0\,\, a.e$ in $\mathbb{R}^n.$ By \cref{lemma3.1} we have $u\geq 0\,\, a.e.$ in $\mathbb{R}^n.$ Following \cite[Theorem A.1]{brasco} we prove that if $u\not\equiv 0$ in $\Omega$ then $u>0\,\, a.e.$ in $\Omega.$ We first prove the assertion for every compact set $K\subset \Omega.$ Then connectedness of $\Omega$ implies that there exists a sequence of compact connected set $K_n \subset \Omega$ such that $u>0$ in $K_n,$ for all $n$ and $\left|\Omega\setminus K_n\right| <\frac{1}{n}.$ Finally letting $n\to \infty,$ we conclude the result. Let $K$ be compact connected and $K:=\{x \in \Omega | dist\left(x, \partial \Omega\right)>2r\}$ for some $r>0$ such that $u\neq 0\,\, a.e.$ in $K.$ Then we claim that $u>0\,\, a.e.$ in $K.$ Since $K$ is compact, $K \subset \cup_{i=1}^m B_{r/2}(x_i)$ with $\left|B_{r/2}(x_i)\cap B_{r/2}(x_{i+1})\right|>0$ for $i=1,..,m-1.$ Now, on the contrary, assume that the zero set of $u,$ $Z=\left\{x \in B_{r/2}(x_i)| u(x)=0\right\}$ for some $i \in \{1,..,m-1\},$ has a positive measure. For every $\alpha>0,$ consider the function 
\[F_{\alpha}(x)=log\left(1+ \frac{u(x)}{\alpha}\right)\,\,\,\, \textnormal{for}\,\,\,\, x\in B_{r/2}(x_i).\]
We see that $F_{\alpha}(x)=0$ for all $x\in Z$ and for every $x\in B_{r/2}(x_i)$ and $y\neq x,$ $y \in Z$ we get
\begin{align}\label{equ3.4}
|F_{\alpha}(x)|=\frac{|F_{\alpha}(x)-F_{\alpha}(y)|}{|x-y|^n} \cdot |x-y|^n.
\end{align}
Integrating \eqref{equ3.4} over $Z$ and $B_{r/2}(x_i),$ we get
\begin{align*}
\int_{B_{r/2}(x_i)} \left|F_{\alpha}(x)\right|dx&\leq \frac{\left(\displaystyle{\sup_{x,y \in B_{r/2}(x_i)}}\left|x-y\right|^n\right)}{|Z|}\int_{B_{r/2}(x_i)} \int_{B_{r/2}(x_i)}\frac{|F_{\alpha}(x)-F_{\alpha}(y)|}{|x-y|^n}dxdy\\
&\leq \frac{Cr^n}{|Z|}\int_{B_{r/2}(x_i)} \int_{B_{r/2}(x_i)}\frac{|F_{\alpha}(x)-F_{\alpha}(y)|}{|x-y|^n}dxdy\\
&=\frac{Cr^n}{|Z|}\int_{B_{r/2}(x_i)} \int_{B_{r/2}(x_i)}\left|\frac{log(u(x)+\alpha)}{log(u(y)+\alpha)}\right| \frac{dxdy}{|x-y|^n}.
\end{align*}
Now using \cref{Loglemma} and $u\geq 0\,\, a.e.$ in $\mathbb{R}^n$ we obtain that 
\begin{align}\label{equ3.6}
\int_{B_{r/2}(x_i)}\left|log\left(1+\frac{u(x)}{\alpha}\right)\right|dx \leq \frac{Cr^{2n}}{|Z|}+K_0 r^n||c||_{L^1(B_r)},
\end{align}
where $C$ and $K_0$ are independent of $\alpha.$ Now letting $\alpha \to 0$ in \eqref{equ3.6} we conclude that $u=0\,\,a.e.$ in $B_{r/2}(x_i).$ Repeating the same argument for other balls that cover $K,$ we infer that $u=0\,\, a.e.$ in $K$ which is contradiction. Thus $u>0\,\, a.e.$ in $K.$\\
{\it Step 2:} If $\Omega$ is unbounded and connected, we follow the arguments of \cite[Lemma~3.3]{delpezzoJDE2017} to conclude $u>0\,\,a.e.$ in $\Omega.$\\
{\it Step 3:} Note that the proof of {\bf Case 1} can be completed if we can remove the connectedness condition on $\Omega$ from the previous steps combining with \cref{lemma3.1}. Therefore it is enough to show that if $u\not\equiv 0$ in $\Omega$ then $u\not\equiv 0$ in all the connected components of $\Omega.$ Again, this follows from the arguments of \cite[Lemma~3.4]{delpezzoJDE2017}.\\
{\bf Case 2: $ c, u \in C(\bar{\Omega}).$} For any open set $\Omega \subset \mathbb{R}^n,$ let $x_0\in \Omega$ such that $u(x_0)=0.$ Consider the ball $B_R(x_0)\subset \Omega.$ Since $c, u \in C(\Omega)$ then $c, u \in C(\overline{B_R(x_0)})$ and also $c \in L^1_{loc}(\Omega).$ Suppose $\Omega$ is bounded and $u \geq 0$ in $\Omega^c.$ By using the second assertion of \cref{lemma3.1} we conclude that $u\geq 0$ in $B^c_R(x_0).$ Now since $u(x_0)=0,$ by \cref{lemma3.2} we have $u=0\,\,a.e.$ in $\mathbb{R}^n.$ If $u \geq 0$ in $\mathbb{R}^n,$ then again using the same argument we conclude that either $u>0$ in $\Omega$ or $u=0\,\, a.e.$ in $\mathbb{R}^n.$ This completes the proof of the theorem.
\end{proof}
\section{Hopf's lemma} \label{sec4}
In this section, we shall prove \cref{mainth2}. We will argue as in \cite[Lemma~4.1]{delpezzoJDE2017}. Since $\Omega$ satisfies interior ball condition, it is enough to prove \cref{mainth2} for a ball, i.e, $\Omega= B_r(x_0).$ For any general domain $\Omega,$ from the interior ball condition we deduce for any $y\in \partial \Omega,$ there exists a ball $B_r(x_0) \subset \Omega$ such that $y \in \partial B_r$ and the proof follows.
\begin{proof}[Proof of \cref{mainth2}]
First we make an observation about non-positivity of $c(x).$ In \cref{mainth1}, when $c,u \in C(\overline{\Omega})$ and $c(x) \leq 0,$ for a bounded domain $\Omega$ with $u\geq 0$ in $\Omega^c$ we conclude that either $u>0$ in $\Omega$ or $u=0\,\, a.e.$ in $\mathbb{R}^n.$ The same conclusion holds if $u \geq 0\,\, a.e.$ in $\mathbb{R}^n.$ In the second part of \cref{mainth2} no non-positivity assumption is made on $c(x).$ If $u \geq 0\,\,a.e$ in $\mathbb{R}^n,$ since $c(x) \geq -c_{-}(x),$ $u$ is a weak supersolution of
\[\mathcal{L}u=-c_{-}(x)g(u)\,\,\,\, \textnormal{in}\,\,\,\, \Omega. \] Hence by \cref{mainth1} ({\bf Case 2}) either $u>0$ in $\Omega,$ in particular $\Omega=B_r(x_0)$ or $u=0,\,\, a.e.$ in $\mathbb{R}^n.$ Suppose $u\not\equiv 0 $ in $B_r(x_0),$ and we need to show \eqref{Hlemma} holds for all $y \in \partial B_r.$ Using \cref{salortNAlemma} and \cref{prop2.2}, for any compact set $K \subset B_r \setminus B_{\rho},$ and $d>0$ we obtain that $\delta^s + d\chi_K$ is a weak solution of 
\[\mathcal{L}(\delta^s+d\chi_K)=f+h_d\,\,\,\, \textnormal{in}\,\,\,\, B_{\rho},\] where
\begin{align*}
h_d(x):=2 \int_{K}\Big[&g\left(\frac{\left|\delta^s(x)-\delta^s(y)-d\right|}{\left|x-y\right|^s}\right)\frac{\delta^s(x)-\delta^s(y)-d}{\left|\delta^s(x)-\delta^s(y)-d\right|}\\
&-g\left(\frac{\left|\delta^s(x)-\delta^s(y)\right|}{\left|x-y\right|^s}\right)\frac{\delta^s(x)-\delta^s(y)}{\left|\delta^s(x)-\delta^s(y)\right|}\Big] K(x,y)\frac{dy}{\left|x-y\right|^s}\\
=2 \int_{K}\Big[&\frac{g\left(\frac{\left|\delta^s(x)-\delta^s(y)-d\right|}{\left|x-y\right|^s}\right)}{\frac{\left|\delta^s(x)-\delta^s(y)-d\right|}{|x-y|^s}}\cdot\frac{\delta^s(x)-\delta^s(y)-d}{\left|x-y\right|^s}\\
&-g\left(\frac{\left|\delta^s(x)-\delta^s(y)\right|}{\left|x-y\right|^s}\right)\frac{\delta^s(x)-\delta^s(y)}{\left|\delta^s(x)-\delta^s(y)\right|}\Big] K(x,y)\frac{dy}{\left|x-y\right|^s}\\
\leq 2\int_{K}\Big[&\frac{1}{q_g}g^{\prime}\left(\frac{\left|\delta^s(x)-\delta^s(y)-d\right|}{\left|x-y\right|^s}\right)\frac{\delta^s(x)-\delta^s(y)-d}{\left|x-y\right|^s}\\
&-g\left(\frac{\left|\delta^s(x)-\delta^s(y)\right|}{\left|x-y\right|^s}\right)\frac{\delta^s(x)-\delta^s(y)}{\left|\delta^s(x)-\delta^s(y)\right|}\Big] K(x,y)\frac{dy}{\left|x-y\right|^s}.
\end{align*}
Note that in the last inequality we used \eqref{condition on g} and $\frac{\delta^s(x)-\delta^s(y)-d}{\left|x-y\right|^s} \leq 0.$ Since $g$ and $g^{\prime}$ are continuous and $K$ is compact
\begin{align*}
h_d(x)
&\leq 2\left(||g^{\prime}||_{L^{\infty}(K)} +||g||_{L^{\infty}(K)}\right)\int_K \Big[\frac{\delta^s(x)-\delta^s(y)-d}{q_g\left|x-y\right|^s}+\frac{\delta^s(x)-\delta^s(y)}{\left|\delta^s(x)-\delta^s(y)\right|}\Big] K(x,y)\frac{dy}{\left|x-y\right|^s}\\
&\to -\infty\,\,\,\, \textnormal{in}\,\,\, B_{\rho}\,\,\,\, \textnormal{as} \,\,\,\, d \to \infty.
\end{align*}
In the previous conclusion we also make use of $\delta^s\in L^{\infty}(\Omega)$ and $dist\left(K, B_{\rho}\right)>0.$ Now we can choose a large $d$ such that
\begin{align}\label{equ4.1}
\sup_{B_{\rho}} f(x)+h_d(x) \leq \inf_{B_{\rho}} c(x)g(u(x)).
\end{align}
Furthermore, for a small $\kappa>0,$ we have $\kappa(r^s+d) \leq \displaystyle{\inf_{B_r\setminus B_{\rho}}} u(x)$ and thus we find that $\kappa (\delta^s+d\chi_K) \leq u$ in $B^c_{\rho}.$ Now using \eqref{equ4.1} and the definition of weak supersolution, we get
\begin{align*}
&\int_{\mathbb{R}^n} \int_{\mathbb{R}^n} g\left(\frac{|u(x)-u(y)|}{|x-y|^s}\right)\frac{u(x)-u(y)}{|u(x)-u(y)|}(\eta(x)-\eta(y))K(x,y)\frac{dxdy}{|x-y|^s} \\
&\geq
\int_{\mathbb{R}^n} \int_{\mathbb{R}^n} g\left(\frac{|\kappa v(x)-\kappa v(y)|}{|x-y|^s}\right)\frac{ v(x)- v(y)}{| v(x)- v(y)|}(\eta(x)-\eta(y))K(x,y)\frac{dxdy}{|x-y|^s} 
\end{align*}
where $v=\delta^s+d\chi_K.$ Therefore by using comparison principle (cf. \cref{prop2.1}) we obtain $\kappa v \leq u$ in $B_{\rho}.$ Hence
\[\kappa \leq \frac{u(x)}{\delta^s(x)}\,\,\, \text{for all}\,\,\, x \in B_{\rho}.\] Thus we find
\[\liminf_{B_r(x_0) \ni x \to y } \frac{u(x)}{\delta^s(x)} >0, \,\,\, \text{for all}\,\,\, y\in \partial B_r.\] This completes the proof.
\end{proof}
\subsection*{Acknowledgement.}The author is indebted to Anup Biswas and Saibal Khan for helpful discussions and suggestions. The author also thanks the referee for his/her careful reading of the manuscript and valuable suggestions.

\end{document}